\documentclass[12pt]{article}
\usepackage{amsmath, amsfonts, amsthm, amssymb, verbatim}
\usepackage{graphicx}
\usepackage[mathcal]{euscript}
\usepackage{amstext}
\usepackage{amssymb,mathrsfs}
\usepackage{bbm}
%%%%%%%%%%%%%%%%%%%%%%%%%%%%%%%%%%%%%%%%
% useful package to latin languages
\usepackage[latin1]{inputenc}
%
%%%%%%%%%%%%%%%%%%%%%%%%%%%%%%%%%%%%%%%%
\hsize=126mm \vsize=180mm
\parindent=5mm
%%%%%%%%%%%% %%%%%%%%%%%%%%%%%%%%%%%%%%%%
\newcommand{\R}{\mathbb R}

  \newcommand{\E}{\mathbb E}

\newcommand{\PP}{\mathbb P}

\newcommand{\calX} {\ensuremath {\mathcal{X}}}

\newcommand{\calY} {\ensuremath {\mathcal{Y}}}
\newcommand{\calR} {\ensuremath {\mathcal{R}}}
\newcommand{\calF} {\ensuremath {\mathcal{F}}}

\newcommand{\vp}{\varphi}

\newcommand \loc    {\text{loc}}

\newcommand{\dive}{{\rm div}}

%

%

%

%%%%%%%%%%%%%%%%%%%%%%%%%%%%%%%%%%%%%%%%%%%%%%%
\newtheorem{theorem}{Theorem}[section]
\newtheorem{proposition}[theorem]{Proposition}
 \newtheorem{remark}[theorem]{Remark}
\newtheorem{lemma}[theorem]{Lemma}

\newtheorem{definition}[theorem]{Definition}
\newtheorem{hypothesis}[theorem]{Hypothesis}

%%%%%%%%%%%%%%%%%%%%%%%%%%%%%%%%%%%%%%%%%%%%%%%

%%%%%%%%%%%%%%%%%%%%%%%%%%%%%%%%%%%%%%%%%%%%%%%
\begin{document}
\title{Stochastic transport equations
\\
with unbounded divergence}

\author{Wladimir Neves\footnote{Instituto de Matem\'atica, Universidade Federal
do Rio de Janeiro, Brazil. 
E-mail: {\sl wladimir@im.ufrj.br}.
}, Christian Olivera(corresponding author)\footnote{Departamento de Matem\'{a}tica, Universidade Estadual de Campinas, Brazil. 
E-mail:  {\sl  colivera@unicamp.br}.
}}

\date{}

\maketitle

\noindent \textit{ {\bf Key words and phrases:} 
stochastic partial differential equations, transport equation,
Cauchy problem, regularization by noise.}

\vspace{0.3cm} \noindent {\bf MSC2010 subject classification:} 60H15, %SPDE
 35R60, %SPDE
 35F10, %Initial conditions for 1-order lin DE
 60H30. %applications of Stoch. An. 

%%%%%%%%%%%%%%%%%%%%%%%%%%%%%%
\begin{abstract}
We study in this article the existence and uniqueness of solutions to a class of 
stochastic transport equations with irregular coefficients and unbounded divergence.
In the first result we assume  the drift is $L^{2}([0,T] \times \R^{d})\cap L^{\infty}([0,T] \times \R^{d})$ and the  divergence is the locally integrable. In the  second result  we show that the smoothing  acts as a selection criterion when the  drift  is in $L^{2}([0,T] \times \R^{d})\cap L^{\infty}([0,T] \times \R^{d})$ without any condition on the divergence. 
 \end{abstract}
%%%%%%%%%%%%%%%%%%%%%%%%%%%%%%

\maketitle

%\begin{center}
%{\large
%Christian Olivera\footnote{Research  supported  FAEPEX 1324/12, FAPESP 2012/18739-0, 2012/18780-0  . }}\\
%
%\textit{Departamento de
% Matem\'{a}tica, Universidade Estadual de Campinas, \\ 
% Campinas - SP, Brasil}
%\\  e-mail:  colivera@imeunicamp.br}
%\end{center}
%%%%%%%%%%%%%%%%%%%%%%%%%%%%%%%%%%%%%%%%%%%%%%%%%%%%
\section {Introduction} \label{Intro}
%%%%%%%%%%%%%%%%%%%%%%%%%%%%%%%%%%%%%%%%%%%%%%%%%%%%

\subsection{Background} 

The linear transport equation
\begin{equation}\label{trasports}
    \partial_t u(t, x) +  b(t,x) \cdot  \nabla u(t,x)  = 0
\end{equation}
appears in a broad spectrum of physical applications, for instance related to fluid dynamics
as it is well described in Lions' books \cite{lion1,lion2}. Moreover, 
Dafermos' book \cite{Dafermos} presents more general applications of the transport equations
in the domain of conservation laws.

\medskip
The existence and uniqueness of classical solutions for the
transport equation \eqref{trasports}, with smooth coefficients
are well known. Indeed, for the so called drift $b \in  L^{\infty} ([0,T]\times \R^d )$
and having Lipschitz bounds, one applies the method of characteristics 
and the classical solution is given by $u(t, x):= u_0(\mathbf{X}^{-1}(t,x))$, where 
the flow $\mathbf{X}$ is the solution, for each $t \in (0,T)$, of the following
system of differential equations
\begin{equation}
\label{CPODE}
   \left \{
   \begin{aligned}
   &\frac{d \mathbf{X}(t)}{dt}= b(t, \mathbf{X}(t)),
   \\[5pt]
   & \mathbf{X}(0)= x.
   \end{aligned}
      \right.
\end{equation}

The above approach used to be the natural one, that is the passage from 
the Lagrangean into Eulerian formulation. The first reversed approach
was considered in 1989 by R. DiPerna, P.L. Lions \cite{DL}.
Indeed,
in that paper they proved that $W^{1,1}$ 
spatial regularity of $b(t,x)$
(together with a condition of boundedness on the divergence) is enough to ensure uniqueness of 
(renormalized) weak solutions
for the transport equations. One recalls that, the existence of bounded solutions is easily 
obtained by standard regularization of the coefficients (and passage to the limit), but
the uniqueness problem is much more delicate. 
Then, they deduced the existence, uniqueness and stability results for \eqref{CPODE}
from corresponding results on the associated linear transport equation. 
L. Ambrosio in \cite{ambrisio} extended the theory developed in  \cite{DL} to include $BV$ vector fields.
At the same time, Ambrosio introduced a probabilistic non trivial axiomatization based on the duality 
between flows and continuity equation. More recently, S. Bianchini, P. Bonicatto in \cite{BianBoni}
proved the uniqueness for \eqref{trasports} in the nearly incompressible BV
vector field setting,
%(without any assumption on the divergence), 
hence positively establishing Bressan's compactness conjecture, see \cite{BRESSANCONJ}.
Furthermore, 
we address the readers to two excellent summaries in 
\cite{ambrisio2} and \cite{lellis2}. 

\subsection{Purpose of this paper}

In the current contribution, we are interested 
in the extension of the theory developed for \eqref{trasports} under 
random perturbations of the drift vector field, 
namely considering the  stochastic linear transport equation  
%(SLTE)
%
\begin{equation}
\label{trasportUNO}
% \left \{
%\begin{aligned}
    \partial_t u(t, x) + \big(b(t, x) + \frac{d B_{t}}{dt}\big ) \cdot  \nabla u(t, x) = 0 \, ,
%    \\[5pt]
 %   &u|_{t=0}=  u_{0} \, ,
%\end{aligned}
%\right .
\end{equation}
when the divergence of the drift $b$ is not bounded, and even integrable. These assumptions
are mathematically interesting and also fundamental for some physical applications.
Moreover, the uniqueness results are false under this assumptions for the corresponding 
deterministic equation \eqref{trasports}, see for instance \cite{DL} Section IV.1. 

\medskip
 Let us recall that, the theory for  the  stochastic linear transport equation  \eqref{trasportUNO} 
has been developing quite well. Indeed,
the first and influential result in this direction was 
given by F. Flandoli, M. Gubinelli, E. Priola
\cite{FGP2}. In that paper, they obtained wellposedness of the stochastic problem for an H\"older 
continuous drift term (with some integrability conditions on the divergence), where their 
approach is based on a careful analysis of the characteristics.
Since the paper \cite{FGP2}, there exist a considerable list of important correlated results, 
to mention a few  \cite{AttFl11}, \cite{Beck}, \cite{Fre1}, \cite{FGP2}, 
\cite{Gali}, \cite{Gesssmith} \cite{Moli}, \cite{NO}, \cite{Ol}, \cite{Wei}. For related
 works see \cite{Alonso}, \cite{Alonso2}, \cite{Gess3}, \cite{Gess}, \cite{Gess4} and \cite{O2}.

\medskip
It is well known that, when divergence of $b$ is not bounded we may develop 
concentrations or vacuum in the continuity equation, which is related to the transport
equation, at least formally. Indeed, let us consider the continuity equation (deterministic case)
$$
    \partial_t v(t, x) +  b(t,x) \cdot  \nabla v(t,x)= - v(t,x) \, {\rm div}b(t,x). 
$$
Applying the simple change of variables $u= v \, e^{\int^t {\rm div} b}$, 
we get the linear transport equation \eqref{trasports}. 
Therefore, boundeness of ${\rm div} b$ prevents some physical extreme 
cases, but which are interesting for applications. 
For instance, the transport-continuity equation with unbounded divergence
 is an important application to hyperbolic systems that, can be written as 
one scalar conservation law plus one transport-continuity equation, 
where the divergence of  the drift term is a measure, 
see \cite{Spino}. 

\medskip
Here, we consider the stochastic linear transport equations
such that  ${\rm div} b$ is not bounded, first, and then, even integrable.
Hence we have to deal with a more general sense of solutions, 
and in the second case the uniqueness property gives up its place to a 
selection principle (see next section).  

%%%%%%%%%%%%%%%%%
\subsection{Selection principle}
%%%%%%%%%%%%%%%%%

It is well known that many partial differential equations cannot, 
in general, have a classical solution, at least for several
important 
%physical 
applications. Then, one may
adopt the concept of weak solutions (for PDEs in divergence form)
or for instance, the concept of viscosity solutions as introduced by 
P.L. Lions, M. G. Crandall \cite{Crandall}. 
Unfortunately, the concepts of nonclassical 
solutions permit nonuniqueness. Therefore, it is natural to 
inquiry if there is a selection principle (or admissibility criteria),
which would from the physical viewpoint select the correct 
physical solution among all nonclassical ones, and ensure the 
uniqueness of a nonclassical solution from the mathematical
viewpoint. 

\medskip
There exist some strategies to tackle the above problem.
One of them is the concept of entropy solutions for 
conservation laws, where an adicional equation is considered, in fact an inequality, which
is automatically satisfied for classical solutions, but play the role of 
a selection principle for weak ones. The existence of entropies 
for the scalar case is trivial, but more delicate for systems of
conservation laws, see \cite{DSSB}. Similarly, we address the 
reader to the concept of maximal dissipation introduced by C.M. Dafermos \cite{Dafermos},
and well adapted by E. Feireisl \cite{EF1} to play the role of a selection principle,
which is violated by the oscillatory solutions
(obtained in the process of convex integration) as introduced
by C. DeLellis, L. Sz\'ekelyhidi \cite{CamiloLaslo}. 
In addition, we address the reader to
some recent results of non-uniqueness for (deterministic) transport equations
obtained also via convex integration, see S. Modena \cite{Modena} 
(and references therein). 

\medskip
Some types of approximations are also used to guarantee uniqueness at the approximate level, and then
establishing an admissible criteria. For instance, the  zero-viscosity limit works well to seek for
(admissible) viscosity solutions of Hamilton-Jacobi equations. 
Furthermore, the weak solutions for scalar conservation laws 
that are uniquely selected by entropy selection principle,  
coincide with solutions obtained by the vanishing viscosity method, 
see C. Dafermos \cite{Dafermos1}. Albeit, the zero-viscosity limit of weak solutions of
Navier Stokes equations, which are not Leray type, does not select a unique solution
of the Euler equations, see T. Buckmaster, V. Vicol \cite{TBVV}. Therefore, 
the vanishing viscosity limit is still poorly mathematically understood
at this time.

%%%%%%%%%%%%%%%%%%%%%%%%%%%%%%
\subsection{The contribution of this paper is two-fold} 
%%%%%%%%%%%%%%%%%%%%%%%%%%%%%%

The first result establishes 
the existence and uniqueness of the equation \eqref{trasportUNO} on the class of quasi-regular solution, 
assuming  that 
$b\in L^{2}([0,T]\times \R^{d})\cap L^{\infty}([0,T]\times \R^{d})$
and ${\rm div}\, b\in L_{loc}^{1}([0,T]\times \R^{d})$. In the  second result,
we show that the smoothing  acts as a selection criterion when 
$b\in L^{2}([0,T]\times \R^{d})\cap L^{\infty}([0,T]\times \R^{d})$, that is, 
the equation with regularized  drift term converges to only one week-limit point 
in $L_{loc}^{2}(\Omega \times [0,T]\times \R^{d})$. The precise definitions
and statements 
will be presented in Section \ref{STATEMAIN}. 

\medskip
One remarks that, both results cited above are intrinsically stochastic. 
Then, we list some others
known conditions on the divergence, related to this paper, in other to  
obtain uniqueness of solution to transport-continuity equations. 
In \cite{Mucha}
is showed uniqueness when the divergence is the $BMO$ space. In \cite{Clop} it is assumed some  global exponential integrability conditions  (both results for the 
deterministic equation). In \cite{Beck} is showed uniqueness of the stochastic transport-continuity equation when 
the divergence is in $L_{t}^{q}(L_{x}^{p})$ with $d/p + 2/q< 1$.
 
\bigskip 
This paper is organized as follows. In the next section we present precisely the setting, introduce some notation, define the class 
of quasiregular weak solutions, and week-asymptotic solutions. Also we state the main results of this paper. 
In Section \ref{PROOFS}, we present the proof of the main results.
To ease the presentation, the proofs of some technical results are postponed to the Appendix.

%%%%%%%%%%%%%%%%%%%%%%%%%%%%%%%%%%%%%%
%\section{Definition of  weak solutions}
\section{Statements of the Main Results} \label{STATEMAIN}
%%%%%%%%%%%%%%%%%%%%%%%%%%%%%%%%%%%%%%

The aim of this section is to present the setting and 
suitable definitions of weak solutions to equation \eqref{trasportUNO}, adapted to treat the problem of well-posedness 
under our very weak assumptions on the regularity of the coefficients and the initial condition.
Moreover, we state the main results of this paper. 

\begin{hypothesis}\label{hyp} 
We assume the following conditions for the vector field $b$,
\begin{equation}\label{con1}
   b \in L^2\big( [0,T] \times \R^d  \big) \cap  L^\infty\big( [0,T] \times \R^d  \big),
\end{equation} 
\begin{equation}\label{con2}
 \dive \, b \in L_{loc}^1 ( [0,T] \times  \R^d ) \, . 
\end{equation}
Moreover, the initial condition is taken to be
\begin{equation}\label{conIC}
 u_0 \in  L^2(\R^d)  \cap L^\infty (\R^d) \, . 
\end{equation}
\end{hypothesis}

\medskip
We shall work on a fixed time interval $t\in[0,T]$, and throughout the paper 
we will use a given probability space $(\Omega, \PP, \calF)$, on which there exists 
an $\R^d$-valued Brownian motion $B_t$ for $t\in[0,T]$. We will use the 
natural filtration of the Brownian motion $\calF_t = \calF_t^B$, 
and restrict ourselves to consider the collection of measurable sets given by the $\sigma$-algebra $\calF = \calF_T$, augmented by the $\PP$-negligible sets. 
Moreover, for convenience 
we introduce the following set of random variables, called the space of stochastic exponentials
$$ 
  \calX := \Big\{ F= \exp \Big( \int_0^T h(s) \cdot d B_s - \frac{1}{2}\int_0^T |h(s)|^2 \, ds \Big)   \    \Big|     \    h \in L^2\big([0,T] ;\R^d \big) \Big\} \, .
$$
Further details on stochastic exponentials and some useful properties are collected in the Appendix.
In particular,
the technical assumption that the $\sigma$-algebra we are using is 
the one provided by the Brownian motion is essential to ensure that 
the family of stochastic exponentials provides a set of test functions 
large enough to obtain almost sure uniqueness.

\medskip
The next definition tells us in which sense a stochastic process is a weak solution of \eqref{trasportUNO}. 
Hereupon, we will use the summation convention on repeated indices.

\begin{definition}\label{defisolu}  Under conditions \eqref{con1}, \eqref{con2} and \eqref{conIC},
a stochastic process $u\in  L^\infty \big( \Omega \times[0,T] \times \R^d \big)$ is called 
a quasiregular weak solution of the Cauchy problem \eqref{trasportUNO}, when
\begin{itemize}
\item ({\it Weak solution}) For any test function $\varphi \in C_c^{\infty}(\R^d)$, the real valued process $\int  u(t,
  x)\varphi(x)  dx$ has a continuous modification which is an
$\mathcal{F}_{t}$-semimartingale, and for all $t \in [0,T]$, we have $\mathbb{P}$-almost surely
\begin{equation} \label{DISTINTSTR}
\begin{aligned}
    \int_{\R^d} u(t,x) \varphi(x) dx = &\int_{\R^d} u_{0}(x) \varphi(x) \ dx
   +\int_{0}^{t} \!\! \int_{\R^d} u(s,x) \ b^i(s,x) \partial_{i} \varphi(x) \ dx ds
\\[5pt]
    &     +   \int_{0}^{t} \!\! \int_{\R^d} \dive b(s,x) u(s,x) \varphi(x) \ dx \, ds 
\\[5pt]    
    &+ \int_{0}^{t} \!\! \int_{\R^d} u(s,x) \ \partial_{i} \varphi(x) \ dx \, {\circ}{dB^i_s} \, .
\end{aligned}
\end{equation}

\item ({\it Regularity in Mean}) For each function $F \in \calX$, the deterministic function $V:=\mathbb{E}[uF]$ is a measurable bounded function, 
which belongs to  $L^{2}([0,T]; H^{1}(\R^d) ) \cap C([0,T]; L^2(\R^d))$.
\end{itemize}
\end{definition}

\medskip
One remarks that, the stochastic
integration in this paper is to be understood in the Stratonovich sense.
This is often considered to be the natural one for this kind of problems, that is,
in view of the Wong-Zakai approximation theorem. Moreover, 
it is useful for computations to present also the Ito formulation of equation \eqref{DISTINTSTR}. It reads 
\begin{align}\label{Ito-weak}
    \int_{\R^d} u(t,x) \varphi(x) \, dx =  &  \int_{\R^d} u_0(x) \varphi(x) \, dx    \\
    & + \int_{0}^{t} \!\! \int_{\R^d} u(s,x) \, \big( b(s,x) \cdot \nabla \varphi(x) + \varphi(x)   \dive b(s,x) \big) \, dx ds     \nonumber   \\
    & + \int_{0}^{t}  \Big( \int_{\R^d} u(s,x) \ \nabla \varphi(x) \ dx \Big) \cdot dB_s  \nonumber  \\
    & + \frac{1}{2}  \int_{0}^{t} \!\! \int_{\R^d} u(s,x) \ \Delta \varphi(x) \, dx ds \, .   \nonumber
\end{align}

Then, we are read to state our first main result. 
\begin{proposition}\label{lemmaexis1}  
Under the conditions of Hypothesis \ref{hyp}, there exist quasi-regular weak solutions of the Cauchy problem
for the linear stochastic transport equation
\eqref{trasportUNO}. 
\end{proposition}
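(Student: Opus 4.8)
The plan is to construct a solution by regularizing the drift and passing to the limit, exploiting the crucial observation that, after pairing the equation against a stochastic exponential $F \in \calX$, the unbounded divergence disappears and one is left with a uniformly parabolic equation for the averaged quantity $\E[uF]$.

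First I would mollify, setting $b^\ve = b * \rho_\ve$ and $u_0^\ve = u_0 * \rho_\ve$. For the smooth drift $b^\ve$ the regularized problem admits a classical solution $u^\ve$ obtained from the stochastic flow of $dX_t = b^\ve(t,X_t)\,dt + dB_t$, namely $u^\ve(t,x) = u_0^\ve(X_t^{-1}(x))$; since the noise is additive this $u^\ve$ is adapted and satisfies \eqref{DISTINTSTR} and \eqref{Ito-weak} with $b$ replaced by $b^\ve$. As the transport equation propagates values along characteristics, one has the uniform bound $\|u^\ve(t)\|_{L^\infty} \le \|u_0\|_{L^\infty}$, while $\|b^\ve\|_{L^\infty} \le \|b\|_{L^\infty}$.

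The heart of the argument is the a priori estimate on $V^\ve := \E[u^\ve F]$, where $F = \exp\bigl(\int_0^T h \cdot dB_s - \tfrac12 \int_0^T |h|^2\,ds\bigr)$. Applying the It\^o product rule to the semimartingale $t \mapsto \bigl(\int u^\ve(t)\varphi\bigr)F_t$, with $F_t = \E[F \mid \calF_t]$, and taking expectations, the martingale parts vanish while the cross variation produces the drift $h$ and the It\^o--Stratonovich correction $\tfrac12\Delta$ survives; the first-order divergence contributions cancel exactly, so that $V^\ve$ is a weak solution of $\partial_t V^\ve + (b^\ve + h)\cdot \nabla V^\ve = \tfrac12 \Delta V^\ve$ with $V^\ve(0,\cdot) = u_0^\ve$. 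Testing this equation against $V^\ve$ and using Young's inequality to absorb $\int (b^\ve + h)\cdot \nabla V^\ve\, V^\ve$ into the diffusion, I obtain by Gr\"onwall a bound on $V^\ve$ in $L^\infty([0,T];L^2) \cap L^2([0,T];H^1)$ depending only on $\|u_0\|_{L^2}$, $\|b\|_{L^\infty}$ and $\|h\|_{L^2}$ --- in particular uniform in $\ve$ and requiring no hypothesis on $\dive b$. Together with $|V^\ve| \le \|u_0\|_{L^\infty}$ and the bound on $\partial_t V^\ve$ in $L^2([0,T];H^{-1}_{loc})$ read off from the equation, the Aubin--Lions lemma yields a subsequence with $V^\ve \to V$ strongly in $L^2_{loc}$ (hence in every $L^p_{loc}$, $p<\infty$) and weakly in $L^2([0,T];H^1)$.

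Passing to a further subsequence, I extract a weak-$*$ limit $u^\ve \rightharpoonup u$ in $L^\infty(\Omega \times [0,T] \times \R^d)$; the limit is adapted, $V = \E[uF]$, and $V$ inherits the regularity in mean by weak lower semicontinuity. To see that $u$ is a weak solution I use that the linear span of $\calX$ is dense in $L^2(\Omega,\calF_T)$: denoting by $R_\varphi(t)$ the It\^o-form residual of \eqref{Ito-weak} evaluated at $u$, it suffices to check $\E[R_\varphi(t)F] = 0$ for every $F \in \calX$, and the computation above shows that this identity is exactly the weak formulation of the parabolic equation for $V$. The problem thus reduces to passing to the limit in the weak form of the $V^\ve$-equation. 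The main obstacle is the divergence term $\int_0^t \int_{\R^d} \dive b^\ve\, V^\ve\, \varphi\, dx\,ds$: at the level of $u^\ve$, which converges only weakly-$*$, the product $\dive b^\ve\, u^\ve$ would be uncontrollable, but the parabolic smoothing upgrades $V^\ve$ to strong convergence, and combined with $\dive b^\ve \to \dive b$ in $L^1_{loc}$ and the uniform bound $|V^\ve| \le \|u_0\|_{L^\infty}$, a Vitali/uniform-integrability argument gives convergence to $\int_0^t \int_{\R^d} \dive b\, V\, \varphi\, dx\,ds$. The remaining terms pass to the limit by the strong convergence of $b^\ve$ in $L^2_{loc}$ and of $V^\ve$. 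Having shown $R_\varphi(t) = 0$ almost surely for every $\varphi$, the right-hand side of \eqref{Ito-weak} provides the required continuous $\calF_t$-semimartingale modification of $t \mapsto \int u(t)\varphi$, which completes the proof.
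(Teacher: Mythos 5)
Your proof is correct, and while its skeleton (mollification, flow representation, uniform $L^\infty$ bound, energy estimates on $V^\varepsilon=\E[u^\varepsilon F]$ via Young's inequality and Gr\"onwall) coincides with the paper's, the way you establish the \emph{weak solution} property is genuinely different. The paper passes to the limit directly in the stochastic weak formulation \eqref{transintegralR2}: the It\^o term is handled by noting that $f\mapsto\int_0^{\cdot}f(s)\cdot dB_s$ is linear and strongly continuous on nonanticipative $L^2$ processes, hence weakly continuous, and the term $\int u^\varepsilon\big(b^\varepsilon\cdot\nabla\varphi+\varphi\,\dive b^\varepsilon\big)$ is handled by pairing the weak-$\star$ $L^\infty$ convergence of $u^\varepsilon$ against the strong $L^1$ convergence of the coefficient. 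You instead reduce the weak solution property to a family of deterministic statements: by density of the span of $\calX$ in $L^2(\Omega,\calF_T)$ (the paper's Lemma \ref{expo}, which the paper reserves for the uniqueness argument), one has $R_\varphi(t)=0$ a.s.\ if and only if $\E[R_\varphi(t)F]=0$ for every $F\in\calX$, and by Lemma \ref{lemma B-F} the latter is exactly the weak form of the parabolic equation \eqref{para} for $V=\E[uF]$; you then pass to the limit in the deterministic $V^\varepsilon$-equations. This buys you two things: you never need the weak-continuity-of-the-It\^o-map argument, and the singular product involving $\dive b^\varepsilon$ is paired against $V^\varepsilon$, which enjoys parabolic regularity, instead of against $u^\varepsilon$. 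Two remarks on the cost side. First, your invocation of Aubin--Lions is actually superfluous: since $|V^\varepsilon|\le\|u_0\|_{L^\infty}$ uniformly, writing
\begin{equation*}
\int_0^t\!\!\int \dive b^\varepsilon\,V^\varepsilon\varphi
=\int_0^t\!\!\int(\dive b^\varepsilon-\dive b)\,V^\varepsilon\varphi
+\int_0^t\!\!\int \dive b\,V^\varepsilon\varphi ,
\end{equation*}
the first term is bounded by $\|u_0\|_{L^\infty}\|\varphi\|_{L^\infty}\|\dive b^\varepsilon-\dive b\|_{L^1([0,t]\times\mathrm{supp}\,\varphi)}\to0$, and the second converges by weak-$\star$ convergence of $V^\varepsilon$ against $\dive b\,\varphi\in L^1$; so weak convergence suffices. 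Second, for $R_\varphi(t)$ to be well-defined you need the It\^o integral of $u(\nabla\varphi)$, hence adaptedness of the limit $u$; you assert this in passing, but it deserves the paper's one-line justification (the space of nonanticipative processes is a closed, hence weakly closed, subspace of $L^2(\Omega\times[0,T])$). Finally, your observation that the energy estimate needs no hypothesis on $\dive b$ is exactly what powers the paper's selection principle (Proposition \ref{PropSP}); for the present proposition, $\dive b\in L^1_{loc}$ is still indispensable, since it is what makes the weak formulation in Definition \ref{defisolu} (and your residual $R_\varphi$) meaningful.
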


The key hypothesis to prove existence of quasiregular solutions, under Hypothesis \ref{hyp},
 is condition \eqref{con1} which allows to obtain a-priori estimates. 
The existence result is somehow classical (see for example \cite{FGP2} ), 
but we still have to check the regularity in mean of such solutions.

\medskip
For the uniqueness, let us consider the following {\it family} of parabolic equations,
that is to say 
\begin{equation}\label{para}
\partial_t V(t,x) + \big( b(t,x) + h(t) \big) \cdot \nabla V(t,x)= \frac{1}{2} \Delta V(t,x)
\end{equation}
with $h(t)\in L^2(0,T)$.   We will show (see Lemma \ref{lem:eq V} below) that for a quasiregular weak solution $u$ 
to the stochastic transport equation \eqref{trasportUNO}, its expected value $V=\E[uF]$ against any stochastic 
exponential $F$ solves, as soon as it is sufficiently regular, a parabolic equation of the family \eqref{para}, 
and therefore, as one could then expect, is unique.  Using the uniqueness result not only for a single 
equation but for the whole family \eqref{para}, and looking at stochastic exponentials as test functions 
(they form a family which is large enough), we are able to obtain almost sure uniqueness.  This idea was introduced 
in \cite{Fre2}, the  new ingredient in this work  is to show uniqueness with unbounded  divergence.
The following lemma is proved in \cite{Fre2} and after that, we state the uniqueness result. 
\begin{lemma}\label{lem:eq V}
If $u$ is a quasiregular weak solution of \eqref{trasportUNO}, then for each function $F \in \calX$, the deterministic function $V:=\mathbb{E}[uF]$ satisfies  the parabolic equation \eqref{para} in the weak sense, with initial condition given by $V_0 = u_0$.
\end{lemma}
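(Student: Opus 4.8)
The plan is to pair the It\^o formulation \eqref{Ito-weak} of the weak solution with the exponential martingale generated by a stochastic exponential, and to read off the resulting deterministic identity. Fix $\varphi\in C_c^\infty(\R^d)$ and $F\in\calX$, say $F=\exp(\int_0^T h(s)\cdot dB_s-\frac12\int_0^T|h(s)|^2\,ds)$, and introduce the associated martingale $F_t=\exp(\int_0^t h(s)\cdot dB_s-\frac12\int_0^t|h(s)|^2\,ds)$, which satisfies $F_0=1$, $F_T=F$, $dF_t=F_t\,h(t)\cdot dB_t$, and has moments of all orders (these facts are collected in the Appendix). Writing $X_t:=\int_{\R^d}u(t,x)\varphi(x)\,dx$, the first bullet of Definition \ref{defisolu} guarantees that $X_t$ is a continuous $\calF_t$-semimartingale whose decomposition is the right-hand side of \eqref{Ito-weak}.

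First I would apply the It\^o product rule to $X_tF_t$, namely
\begin{equation*}
d(X_tF_t)=F_t\,dX_t+X_t\,dF_t+d\langle X,F\rangle_t .
\end{equation*}
The crucial term is the cross-variation: since the martingale part of $X_t$ is $\big(\int_{\R^d} u\,\nabla\varphi\,dx\big)\cdot dB_t$ and that of $F_t$ is $F_t\,h(t)\cdot dB_t$, one obtains
\begin{equation*}
d\langle X,F\rangle_t=F_t\Big(\int_{\R^d}u(t,x)\,h(t)\cdot\nabla\varphi(x)\,dx\Big)\,dt .
\end{equation*}
This is precisely the step that manufactures the extra drift $h(t)$ appearing in the family \eqref{para}; everything else is bookkeeping.

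Next I would integrate from $0$ to $t$ and take expectations. The two genuine stochastic integrals, coming from $F_s\,dX_s$ and $X_s\,dF_s$, have zero mean — here is where the bound $u\in L^\infty(\Omega\times[0,T]\times\R^d)$ together with the finiteness of all moments of $F_t$ is used to verify the requisite integrability. Using $F_0=1$ and the determinism of $u_0$, and invoking Fubini to exchange $\E$ with the $dx$-integration, one reaches
\begin{equation*}
\E[X_tF_t]=\int_{\R^d}u_0\varphi\,dx+\int_0^t\!\!\int_{\R^d}\E[u(s,x)F_s]\Big(b\cdot\nabla\varphi+\varphi\,\dive b+\tfrac12\Delta\varphi+h(s)\cdot\nabla\varphi\Big)\,dx\,ds .
\end{equation*}
It then remains to replace every $F_s$ by $F=F_T$: since $u(s,\cdot)$ is $\calF_s$-adapted and $F_t$ is a martingale, the tower property gives $\E[u(s,x)F_s]=\E[u(s,x)F_T]=V(s,x)$ and likewise $\E[X_tF_t]=\int_{\R^d}V(t,x)\varphi\,dx$. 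Because $h(s)$ does not depend on $x$ we have $\dive(b+h)=\dive b$, so integrating the transport and Laplacian terms by parts shows that the displayed identity is exactly the weak formulation of \eqref{para}; the regularity in mean recorded in Definition \ref{defisolu}, namely $V\in L^2([0,T];H^1(\R^d))\cap C([0,T];L^2(\R^d))$, is what makes these integrations by parts licit. The initial condition follows from $\E[F]=1$ and $V(0,x)=\E[u_0(x)F]=u_0(x)$.

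The main obstacle is not conceptual but lies in the integrability and Fubini justifications: one must confirm that $X_tF_t$ is a genuine (not merely local) semimartingale with integrable martingale parts, so that the expectations of the $dB_s$-integrals vanish, and that $(s,x)\mapsto\E[u(s,x)F_s]$ is jointly measurable and integrable, so that Fubini applies — both of which follow from $u\in L^\infty$ together with the $L^p$-boundedness of the exponential martingale. The one genuinely delicate computation, as opposed to routine, is the cross-variation $d\langle X,F\rangle_t$, since it is the mechanism converting the Stratonovich noise in \eqref{trasportUNO} into the deterministic drift $h$ that selects the member of the parabolic family \eqref{para}.
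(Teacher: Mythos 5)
Your proof is correct and takes essentially the same route as the paper: the paper multiplies the It\^o formulation \eqref{Ito-weak} by $F=F_T$ and takes expectations, handling the term $\E\big[\int_0^t Y_s\cdot dB_s\, F\big]$ via Lemma \ref{lemma B-F} in the Appendix, whose proof (the covariance of the two It\^o integrals, then the tower property to replace $F_s$ by $F$) is precisely your quadratic-covariation computation $d\langle X,F\rangle_t$ followed by the same conditioning step. The only difference is bookkeeping: you run the It\^o product rule dynamically against the martingale $F_t$, while the paper works statically with the terminal value $F$ and delegates the cross-term computation to the appendix lemma; the mechanism producing the drift $h$ and the verification of the initial condition via $\E[F]=1$ are identical.
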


\begin{proof}
Take any $F\in \calX $ and any quasiregular weak solution $u$. By definition, $V(t,x) \in L^{2} \big( [0,T]; H^{1}(\R^d) \big) \cap C \big( [0,T] ; L^2 (\R^d) \big)$. Consider the It\^o integral form of the equation satisfied by $u$, as given in \eqref{Ito-weak}. To obtain an equation for $V$ we multiply this equation by $F$ and take expectations:
\begin{align}\label{eq1}
    \int_{\R^d} V(t,x) \varphi(x) \, dx = & \int_{\R^d} V(0,x) \varphi(x) \, dx \nonumber \\
    & + \int_{0}^{t} \!\! \int_{\R^d} V(s,x) \, \big( b(s,x) \cdot \nabla \varphi(x) + \varphi(x)   \dive b(s,x) \big) \, dx ds      \nonumber  \\[5pt]
    & + \E \Big[  \int_{0}^{t}  \Big( \int_{\R^d} u(s,x) \ \nabla \varphi(x) \ dx \Big) \cdot dB_s  \ F \Big] \\[5pt]  
    & + \frac{1}{2}  \int_{0}^{t} \!\! \int_{\R^d} V(s,x) \ \Delta \varphi(x) \, dx ds \, .   \nonumber
\end{align} 
By definition of quasiregular weak solutions, $\int_{\R^d} u(\cdot,x) \varphi(x) \, dx$ is an adapted square integrable process for any $\varphi\in C^\infty_c(\R^d)$. Therefore,
$$Y_s = \int_{\R^d} u(s,x) \nabla \varphi(x) \, dx$$
is also an adapted square integrable process. The expected value of the stochastic integral on the third line of \eqref{eq1} can be rewritten as the expected value of a Lebesgue integral against a certain function $h \in L^2\big([0,T] \big)$ due to the properties of stochastic exponentials:
\begin{align*}
 \E \Big[  \int_{0}^{t}  \Big( \int_{\R^d} u(s,x) \ \nabla \varphi(x) \ dx \Big) \cdot dB_s  \ F \Big] =  \int_{0}^{t} \int_{\R^d} \!  V(s,x) h(s) \cdot \nabla \varphi(x) \, dx ds   \, .
\end{align*}
 This is shown in detail in Lemma \ref{lemma B-F} in the Appendix.
\medskip

Now, due to the regularity of $V$, we see that $V$ is a weak solution of the parabolic equation \eqref{para}, that is to say, for each test function $\vp \in C^\infty_c(\R^d)$ 
\begin{align}\label{eq V}
    \int_{\R^d} V(t,x)  \varphi (x) \, dx =  & \int_{\R^d} V(0,x)  \varphi (x) \, dx   \nonumber  \\
     &+  \int_{0}^{t} \int_{\R^d} \!  V(s,x) \big(   b(s,x) \cdot  \nabla \varphi (x) +   \varphi (x) \dive b(s,x)   \big)  \, dx  ds     \nonumber    \\[5pt]
     &+  \int_{0}^{t} \int_{\R^d} \!  V(s,x) h(s) \cdot \nabla \varphi(x) \, dx ds     \nonumber    \\[5pt]
     & - \frac{1}{2}\int_{0}^{t}   \int_{\R^d} \!   \nabla V(s,x)  \cdot \nabla \varphi (x) \, dx  ds  \, .
\end{align}

As explained in the Appendix, due to the properties of stochastic exponentials we have that $F$ is a martingale with mean $1$. Since $u_0$ is deterministic, it immediately follows that $V_0 = \E \big[ u_0 F \big] = u_0$.
\end{proof}

\medskip
Then, we can state the uniqueness result.
\begin{theorem}\label{uni} 
Under the conditions of Hypothesis \ref{hyp}, uniqueness holds for quasiregular weak solutions of  the Cauchy problem 
for the linear stochastic transport equation \eqref{trasportUNO} in the following sense:
if $u,v \in  L^\infty \big(\Omega \times [0,T] \times \R^d \big)$ are two quasiregular weak solutions 
with the same initial data $u_{0}$, then  $u= v$ almost everywhere 
in $ \Omega  \times [0,T] \times \R^d $. 
\end{theorem}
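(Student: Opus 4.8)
The plan is to reduce the almost-sure uniqueness statement to a deterministic uniqueness statement for the whole family of parabolic equations \eqref{para}, exploiting that the stochastic exponentials $\calX$ form a total set in $L^2(\Omega,\calF_T,\PP)$. First I would set $w := u-v$. Since \eqref{trasportUNO} and the defining identity \eqref{DISTINTSTR} are linear in the unknown, and $u,v$ share the initial datum $u_0$, for every fixed $F\in\calX$ (with associated $h\in L^2([0,T];\R^d)$) Lemma \ref{lem:eq V} applied to $u$ and to $v$ shows that $W:=\E[uF]-\E[vF]=\E[wF]$ is a weak solution of the \emph{same} parabolic equation of the family \eqref{para}, now with vanishing initial condition $W_0=u_0-u_0=0$, and with $W\in L^2([0,T];H^1(\R^d))\cap C([0,T];L^2(\R^d))$ by the regularity-in-mean part of Definition \ref{defisolu}. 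Everything therefore hinges on proving $W\equiv 0$.

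For that parabolic uniqueness I would first observe that, although $\dive b$ appears explicitly in the weak formulation \eqref{eq V}, it disappears once the spatial derivative is moved onto $W$: since $W\in H^1\cap L^\infty$ and $b\in L^\infty$ with $\dive b\in L^1_{loc}$, the Leibniz rule $\dive(Wb)=b\cdot\nabla W+W\,\dive b$ holds in $\mathcal{D}'(\R^d)$, so that $\int W\,(b\cdot\nabla\varphi+\varphi\,\dive b)\,dx=-\int (b\cdot\nabla W)\,\varphi\,dx$ for every $\varphi\in C_c^\infty(\R^d)$. Hence \eqref{eq V} is exactly the weak form of the transport--diffusion equation $\partial_t W+(b+h)\cdot\nabla W=\tfrac12\Delta W$ with data $(b+h)\cdot\nabla W\in L^2([0,T]\times\R^d)$ and no divergence term. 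From the equation one reads off $\partial_t W\in L^2([0,T];H^{-1})$, so the Lions--Magenes energy identity applies, $t\mapsto\|W(t)\|_{L^2}^2$ is absolutely continuous, and $\tfrac12\tfrac{d}{dt}\|W\|_{L^2}^2=-\tfrac12\|\nabla W\|_{L^2}^2-\int (b+h)\cdot\nabla W\,W\,dx$.

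The crucial step, and the point where the unbounded divergence is handled, is to estimate the transport term \emph{without} integrating by parts: since $|b+h|\le\|b\|_{L^\infty}+|h(t)|$ a.e., one has $\big|\int(b+h)\cdot\nabla W\,W\,dx\big|\le(\|b\|_{L^\infty}+|h(t)|)\,\|\nabla W\|_{L^2}\|W\|_{L^2}$, and Young's inequality absorbs $\|\nabla W\|_{L^2}$ into the dissipative term $\tfrac12\|\nabla W\|_{L^2}^2$ coming from the Laplacian. This yields $\tfrac{d}{dt}\|W\|_{L^2}^2\le C(\|b\|_{L^\infty}+|h(t)|)^2\,\|W\|_{L^2}^2$ with an $L^1(0,T)$ coefficient (because $|h|^2\in L^1$), so Grönwall together with $\|W(0)\|_{L^2}=0$ forces $W\equiv 0$. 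I expect this term to be the main obstacle: the naive symmetrization $\int(b+h)\cdot\nabla W\,W=-\tfrac12\int\dive b\,W^2$ is useless here, since $\dive b$ is only $L^1_{loc}$ and the integral need not be finite; it is precisely the smoothing $\tfrac12\Delta$ that keeps $\nabla W$ in $L^2$ and lets one bypass the divergence altogether.

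Finally I would close by the totality argument. Having shown $\E[w(t,x)F]=0$ for a.e.\ $(t,x)$ and every $F\in\calX$, I choose a countable family $\{F_n\}\subset\calX$ whose parameters $h_n$ range over a countable dense subset of $L^2([0,T];\R^d)$; then for a.e.\ fixed $(t,x)$ one has $\E[w(t,x)F_n]=0$ for all $n$. Since the linear span of the stochastic exponentials is dense in $L^2(\Omega,\calF_T,\PP)$ (recorded in the Appendix), this forces $w(t,x)=0$ $\PP$-a.s., and Fubini then gives $u=v$ a.e.\ in $\Omega\times[0,T]\times\R^d$.
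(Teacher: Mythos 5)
Your overall architecture coincides with the paper's: reduce everything to showing that $W:=\E[(u-v)F]$ vanishes for every $F\in\calX$ via Lemma \ref{lem:eq V}, prove uniqueness for the parabolic family \eqref{para} with zero initial datum, and then recover $u=v$ from the density of the span of $\calX$ in $L^2(\Omega)$ (Lemma \ref{expo}). Where you genuinely diverge is the parabolic step, which is the heart of the theorem since it is where the unbounded divergence must be neutralized. The paper mollifies $V$ in space, derives a closed equation for $V_\varepsilon$ up to the commutator $\calR_\varepsilon(b,V)$, applies the chain rule to $V_\varepsilon^2$ (legitimate because $V_\varepsilon$ is smooth in $x$ and absolutely continuous in $t$), and removes the commutator term in the limit using the Le Bris--Lions Lemma \ref{conmuting}, arriving at the energy inequality \eqref{eqen}. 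You instead eliminate $\dive b$ once and for all through the distributional Leibniz rule $\dive(Wb)=b\cdot\nabla W+W\dive b$ (which does hold under exactly your stated hypotheses; note its proof is itself a one-parameter mollification argument, so the commutator idea is hidden inside it) and then invoke the Lions--Magenes energy identity for $W\in L^2(H^1)$ with $\partial_t W\in L^2(H^{-1})$. Both routes are sound: yours trades the explicit commutator machinery for an abstract energy identity and is arguably cleaner; the paper's stays elementary and self-contained, never needing time-derivative duality.

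Two points in your write-up need repair. First, your claim that $(b+h)\cdot\nabla W\in L^2([0,T]\times\R^d)$ is not justified: the bound $\|h(t)\cdot\nabla W(t)\|_{L^2_x}\le |h(t)|\,\|\nabla W(t)\|_{L^2_x}$ only exhibits a product of two $L^2(0,T)$ functions, hence an $L^1$-in-time quantity, and this is what the standard Lions--Magenes lemma ($\partial_t W\in L^2([0,T];H^{-1})$) needs. The fix is easy: since $h$ is constant in $x$, write $h\cdot\nabla W=\dive(Wh)$ and observe $\|W(t)h(t)\|_{L^2_x}\le |h(t)|\,\sup_{t}\|W(t)\|_{L^2_x}$, which is in $L^2(0,T)$ because $W\in C([0,T];L^2(\R^d))$ by Definition \ref{defisolu}; hence $\partial_t W=\tfrac12\Delta W-b\cdot\nabla W-\dive(Wh)$ does lie in $L^2([0,T];H^{-1})$ (alternatively, use the known extension of the energy identity to $\partial_t W\in L^2(H^{-1})+L^1(L^2)$). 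Second, in the closing step the null set on which $\E[w(t,x)F]=0$ may fail depends on $F$, so your reduction to a countable family $\{F_n\}$ is indeed necessary; but then you must argue that the span of this countable subfamily is still dense, which requires the continuity of $h\mapsto F_h$ from $L^2([0,T];\R^d)$ into $L^2(\Omega)$ on top of Lemma \ref{expo}. (The paper sidesteps this by testing against products $Yf(t,x)$ and using density in $L^2(\Omega\times[0,T]\times\R^d)$.) With these two patches your proof is complete.
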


\medskip
Now, we consider the second main result of this paper. 

\medskip
To this end, we say that a random field $\{ S(t,x) : t \in [0,T],~x \in \mathbb{R} \}$ is a spatially dependent semimartingale if for
each $x \in \mathbb{R}$, $\{S(t,x): t \in [0,T]\}$ is a semimartingale in relation to the same filtration $\{ \mathcal{F}_t : t \in [0,T] \}$. If
$S(t,x)$ is a $C^{\infty}$-function of $x$ and continuous in $t$ almost everywhere,  it is called  a $C^{\infty}$-semimartingale.  
See \cite{Ku} for a rigorous study of spatially depend semimartingales
and applications to  stochastic differential equations.

\medskip
In the following, we introduce a new concept of solution for the stochastic transport  equation \eqref{trasportUNO}. 
\begin{definition} We say that $u$ in $L^{2}(\Omega\times [0,T]; L_{loc}^{2}(\R^{d}))\cap  L^{\infty}(\Omega\times [0,T] \times \R^{d})$ is a 
week-asymptotic solution of the Cauchy problem 
for the linear stochastic transport equation \eqref{trasportUNO}, if
\begin{enumerate}
\item There exists a sequence of $C^{\infty}$-semimartingales $\{ u_{\epsilon} \}_{\epsilon> 0}$, such that
$ u=\lim_{\epsilon \rightarrow 0}u_{\epsilon}$   
weak in $L^{2}(\Omega\times [0,T], L_{loc}^{2}(\R^{d})) $ and $\ast-$weak  in  $L^{\infty}(\Omega\times [0,T] \times \R^{d})$.

\item  For all $\epsilon>0$, the semimartingale $u_{\epsilon}$ verifies 
\[
 u^\varepsilon (t, x,\omega) =u_{0}^{\epsilon}(x) + \int_{0}^{t}  \nabla u^\varepsilon (s, x,\omega)  \cdot b^\varepsilon (s, x)  ds +
+    \int_{0}^{t}   \nabla u^\varepsilon  \circ d B_{t}(\omega), 
\]
where $u_{0}^{\epsilon}$ and $b^\varepsilon $ are mollfiers approximation of $u_{0}$ and $b$ respectively.
\end{enumerate}
\end{definition}

Then, we have the following selection principle.
\begin{proposition} 
\label{PropSP} 
Under conditions \eqref{con1} and \eqref{conIC}, 
%let  $\{u^\varepsilon\}$  be an arbitrary family of solutions of the regularized problem \eqref{trasport-reg}. Then 
there exists a unique week-asymptotic solution of the Cauchy problem
for the linear stochastic transport equation \eqref{trasportUNO}.
\end{proposition}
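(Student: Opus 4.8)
The plan is to build a week-asymptotic solution by regularization, extract a weak limit from uniform $L^\infty$ bounds, and then pin that limit down uniquely by testing it against the stochastic exponentials of $\calX$ and appealing to well-posedness for the whole family \eqref{para}. The decisive point---what lets us discard the divergence condition \eqref{con2} altogether---is that the second order term $\tfrac12\Delta$ in \eqref{para} supplies a parabolic energy estimate requiring only the uniform bound $\|b^\epsilon\|_{L^\infty}\le\|b\|_{L^\infty}$, and never $\dive b$ itself.

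For existence I would mollify, $b^\epsilon=b\ast\rho_\epsilon$ and $u_0^\epsilon=u_0\ast\rho_\epsilon$; by \eqref{con1} and \eqref{conIC} these are smooth, satisfy $\|b^\epsilon\|_{L^\infty}\le\|b\|_{L^\infty}$ and $\|u_0^\epsilon\|_{L^\infty}\le\|u_0\|_{L^\infty}$, and converge to $b$, $u_0$ in $L^2$. For each fixed $\epsilon$ the field $b^\epsilon$ is smooth with bounded derivatives, so its characteristics generate a smooth stochastic flow of diffeomorphisms (Kunita \cite{Ku}) and the regularized problem in the definition has the explicit $C^\infty$-semimartingale solution $u_\epsilon$ obtained by composing $u_0^\epsilon$ with the inverse flow. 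Being such a composition, $u_\epsilon$ obeys $\|u_\epsilon\|_{L^\infty(\Omega\times[0,T]\times\R^d)}\le\|u_0\|_{L^\infty}$, a bound independent of $\epsilon$; on every compact set this also gives a uniform bound in $L^{2}(\Omega\times[0,T];L_{loc}^{2}(\R^{d}))$. Banach--Alaoglu then yields a subsequence with $u_\epsilon\rightharpoonup u$ weak-$\ast$ in $L^\infty$ and weakly in $L^{2}(\Omega\times[0,T];L_{loc}^{2})$, and $(u,\{u_\epsilon\})$ is a week-asymptotic solution.

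For uniqueness, fix $F\in\calX$ with associated $h\in L^2(0,T)$ and set $V_\epsilon:=\E[u_\epsilon F]$. Repeating the computation of Lemma \ref{lem:eq V} at the regularized level (using Lemma \ref{lemma B-F} to convert the expectation of the stochastic integral against $F$ into a Lebesgue integral carrying $h$), $V_\epsilon$ is a weak solution of $\partial_t V_\epsilon+(b^\epsilon+h)\cdot\nabla V_\epsilon=\tfrac12\Delta V_\epsilon$ with datum $u_0^\epsilon$. Testing this with $V_\epsilon$ and writing $\int\dive(b^\epsilon)V_\epsilon^2\,dx=-2\int b^\epsilon\cdot\nabla V_\epsilon\,V_\epsilon\,dx$, Young's inequality together with $\|b^\epsilon\|_{L^\infty}\le\|b\|_{L^\infty}$ absorbs half of the gradient term and leaves $\tfrac{d}{dt}\|V_\epsilon\|_{L^2}^2+\tfrac12\|\nabla V_\epsilon\|_{L^2}^2\le C\|b\|_{L^\infty}^2\|V_\epsilon\|_{L^2}^2$, whence Gr\"onwall bounds $V_\epsilon$ in $L^\infty(0,T;L^2)\cap L^2(0,T;H^{1})$ uniformly in $\epsilon$. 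I expect this to be the crux of the argument: the unbounded divergence is bypassed precisely because the drift is bounded and the Laplacian produces the coercive gradient term.

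With these uniform estimates I would extract $V_\epsilon\rightharpoonup V$ in $L^2(0,T;H^{1})$; testing against $F\otimes\psi$, which lies in $L^1(\Omega\times[0,T]\times\R^d)$ since $\E[F]=1$, and using $u_\epsilon\rightharpoonup u$, identifies the limit as $V=\E[uF]$. Passage to the limit in the weak formulation is immediate for the linear terms, while the advective term converges because $b^\epsilon\to b$ strongly in $L^2$ pairs with the weakly convergent $\nabla V_\epsilon$; thus $V=\E[uF]$ solves \eqref{para} with datum $u_0$, the advection being read in the form $\int b\cdot\nabla V\,V$, finite since $b\in L^\infty$ and $V\in H^1$, so no integrability of $\dive b$ is needed. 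The same energy estimate applied to the difference of two solutions of \eqref{para} (with $b$ in place of $b^\epsilon$) forces them to coincide, so $\E[uF]$ is determined by $u_0$, $b$ and $h$ alone---independently of the mollifier and of the extracted subsequence. As this holds for every $F\in\calX$ and the stochastic exponentials are total in $L^2(\Omega,\calF_T)$ (see the Appendix), any two week-asymptotic solutions $u,u'$ satisfy $\E[(u-u')F]=0$ for a.e. $(t,x)$ and all $F\in\calX$, and therefore $u=u'$.
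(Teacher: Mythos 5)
Your proposal reaches the correct statement and shares the paper's skeleton (mollification, uniform $L^\infty$ bounds, parabolic energy estimates on $V_\epsilon=\E[u_\epsilon F]$, density of the stochastic exponentials in $L^2(\Omega)$), but your uniqueness mechanism is genuinely different from the paper's. The paper never derives a PDE for the limit $V$: it takes two regularized families $V_\varepsilon=\E[u^{\varepsilon}F]$ and $W_\delta=\E[u^{\delta}F]$ built from two \emph{different} mollifiers, writes the classical parabolic equation satisfied by the difference $U_{\varepsilon,\delta}=V_\varepsilon-W_\delta$, which carries the extra source term $\nabla W_\delta\cdot(b^\varepsilon-b^\delta)$, and closes an energy/Gr\"onwall estimate entirely at the smooth level, ending with a bound of the form $\int U_{\varepsilon,\delta}^2(t)\,dx\le \|V_0^\varepsilon-W_0^\delta\|_{L^2}^2+C\|\nabla W_\delta\|_{L^2}\|b^\varepsilon-b^\delta\|_{L^2}$, whose right-hand side vanishes because $b\in L^2$ forces $b^\varepsilon,b^\delta\to b$ strongly in $L^2$ while $\nabla W_\delta$ stays bounded in $L^2$ by the existence estimates; every chain rule and integration by parts used there is classical. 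You instead pass to the limit in the equation (your argument for this step is correct: strong $L^2$ convergence of $b^\epsilon$ paired against weak $L^2$ convergence of $\nabla V_\epsilon$, with the advection kept in non-divergence form, meaningful since $V\in L^2(H^1)$ and $b\in L^\infty$), and then invoke uniqueness for \eqref{para} in the energy class. Your route buys a cleaner characterization (the selected solution is \emph{the} unique energy solution of \eqref{para} tested against each $F\in\calX$); the paper's route buys that no energy identity for merely-weak solutions of \eqref{para} ever needs to be justified.

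That last point is where your write-up has its one real gap: the claim that ``the same energy estimate applied to the difference of two solutions of \eqref{para} forces them to coincide'' is not automatic, because a weak solution in $L^\infty(0,T;L^2)\cap L^2(0,T;H^1)$ cannot simply be used as a test function in its own equation. You must justify, for $W=V_1-V_2$ with $W(0)=0$, the inequality $\|W(t)\|_{L^2}^2+\int_0^t\|\nabla W\|_{L^2}^2\,ds\le 2\int_0^t\big|\int (b+h)\cdot\nabla W\,W\,dx\big|\,ds$. In your setting this is fillable by a standard argument: since $b\in L^\infty$, $h\in L^2(0,T)$ and $W\in L^2(0,T;H^1)$, the advection $(b+h)\cdot\nabla W$ is a genuine function in $L^1(0,T;L^2(\R^d))$, so mollifying the equation in space gives $\partial_t W_\delta=\tfrac12\Delta W_\delta-\rho_\delta\ast\big((b+h)\cdot\nabla W\big)$ with \emph{no commutator to control} --- precisely the contrast with Theorem \ref{uni}, where the divergence-form weak formulation forces the Le Bris--Lions Lemma \ref{conmuting}; testing with $W_\delta$, letting $\delta\to0$, and applying Young's inequality (which yields the integrable Gr\"onwall weight $C(1+|h(s)|^2)$, unless one also notes that the $h$-term vanishes by integration by parts) completes the estimate. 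With this step made explicit your proof is complete and proves the same statement as the paper.
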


%%%%%%%%%%%%%%%
\section{Strategy of Proofs}
\label{PROOFS}
%%%%%%%%%%%%%%%

%%%%%%%%%%%%%%%%%%%%%%%%%%%%%%%%%
\subsection{ Existence Proof}
\label{EXISTENCE}
%%%%%%%%%%%%%%%%%%%%%%%%%%%%%%%%%

The main issue of this section is to prove Proposition \ref{lemmaexis1}. 

\begin{proof}[{\bf Proof of Proposition \ref{lemmaexis1}}] We divide the proof into two steps. First, using an approximation procedure we shall
 prove that the problem   \eqref{trasportUNO}
 admits weak solutions under our hypothesis. Then, in the second step, we show that the solutions obtained as limit of regularized problems in the first step are
   indeed quasiregular solutions.
\bigskip

{\it Step 1: Weak solution property.} Let $\{\rho_\varepsilon\}_{\varepsilon> 0}$ be a family of standard symmetric mollifiers with compact support.
 Using this family  of functions we define the family of regularized coefficients as
 $b^{\varepsilon}(t,x) =   (b(t,\cdot) \ast \rho_\varepsilon (\cdot) ) (x)  $.
  Similarly, define the family of regular approximations of the initial condition 
	$u_0^\varepsilon (x) = (u_0(\cdot) \ast \rho_\varepsilon (\cdot) )(x)  $.

One remark that, any element $b^{\varepsilon}$, $u_0^\varepsilon$ of the two families
 we have defined is smooth (in space) and compactly supported, hence with bounded
 derivatives of all orders. Then, for any fixed $\varepsilon>0$, the classical theory of Kunita, see \cite{Ku} or \cite{Ku3},
 provides the existence of a unique 
 solution $u^{\varepsilon}$ to the regularized equation 
\begin{equation}\label{trasport-reg}
 \left \{
\begin{aligned}
    &d u^\varepsilon (t, x,\omega) +  \nabla u^\varepsilon (t, x,\omega)  \cdot \big( b^\varepsilon (t, x)  dt +
 \circ d B_{t}(\omega) \big) = 0 \, ,
    \\[5pt]
    &u^\varepsilon \big|_{t=0}=  u_{0}^\varepsilon
\end{aligned}
\right .
\end{equation}
together with the representation formula
\begin{equation}\label{repr formula}
u^\varepsilon (t,x) = u_0^\varepsilon \big( (\phi_t^\varepsilon)^{-1} (x) \big)
\end{equation}
in terms of the (regularized) initial condition and the inverse flow $(\phi_t^\varepsilon)^{-1}$ associated to the equation of characteristics of \eqref{trasport-reg}, which reads 
\begin{equation*}
d X_t = b^\varepsilon (t, X_t) \, dt + d B_t \, ,  \hspace{1cm}   X_0 = x \,.
\end{equation*}

\medskip

If $u^\varepsilon$ is a solution of \eqref{trasport-reg}, it is also a weak solution, 
which means that for any test function $\varphi \in C_c^\infty(\R^d)$, 
$u^\varepsilon$ satisfies for all $t \in [0,T]$,  
the following equation (written in It\^o form)
\begin{equation} \label{transintegralR2}
\begin{aligned}
    \int_{\R^d} u^\varepsilon(t,x) &\varphi(x) \, dx= \int_{\R^d} u^\varepsilon_{0}(x) \varphi(x) \, dx
   +\int_{0}^{t} \!\! \int_{\R^d} u^\varepsilon(s,x) \, b^\varepsilon(s,x) \cdot \nabla \varphi(x) \, dx ds
\\[5pt]
   &+ \int_{0}^{t} \!\! \int_{\R^d} u^\varepsilon(s,x)\,  \dive \, b^\varepsilon(s,x) \, \varphi(x) \, dx ds 
\\[5pt]
    &+ \int_{0}^{t} \!\! \int_{\R^d} u^\varepsilon(s,x) \, \partial_{i} \varphi(x) \, dx \, dB^i_s
    + \frac{1}{2} \int_{0}^{t} \!\!\int_{\R^d} u^\varepsilon(s,x) \Delta \varphi(x) \, dx ds \, .
\end{aligned}
\end{equation}
To prove the existence of weak solutions to \eqref{trasportUNO} we shall show that the sequence $u^\varepsilon$ admits a convergent subsequence, and pass to the limit in the above equation along this subsequence. This is done following the classical argument of \cite[Sect. II, Chapter 3]{Pardoux}, see also \cite[Theorem 15]{FGP2}.

\medskip
 By the representation formula \eqref{repr formula} itself, we also get the uniform bound in 
$L^\infty  \big( \Omega \times [0,T] \times \R^d \big)$. Therefore, there exists a sequence 
$\varepsilon_n \to 0$ such that $u^{\varepsilon_n}$ weak-$\star$ converges in $L^\infty$ 
and weakly in $L^2(\Omega \times [0,T], L_{loc}^{2}(\R^{d}))$ to some process 
$u\in L^\infty  \big( \Omega \times [0,T] \times \R^d \big) \cap  L^2(\Omega \times [0,T], L_{loc}^{2}(\R^{d})) $. To ease notation, let us denote $\varepsilon_n$ by $\varepsilon$ and for every $\varphi\in  C_c^{\infty}(\R^d)$, $\int_{\R^d} u^\varepsilon (t,x) \varphi(x) \, dx $ by $u^\varepsilon(\varphi)$, including the case $\varepsilon=0$.

\medskip
Clearly, along the convergent subsequence found above, the sequence of nonanticipative processes $u^{\varepsilon}(\varphi)$ also weakly converges in $L^2 \big( \Omega \times [0,T])$ to the process $u( \varphi)$, which is progressively measurable because the space of non\-anticipative processes is a closed subspace of $L^2 \big( \Omega \times [0,T])$, hence weakly closed. It follows that the It\^o integral of the bounded process $u(\varphi)$ is well defined. Moreover, the mapping $f\mapsto \int_0^\cdot f(s) \cdot dB_s$ is linear continuous from the space of nonanticipative $L^2(\Omega \times[0,T] ; \R^d)$-processes to $L^2(\Omega \times [0,T])$, hence weakly continuous. Therefore, the It\^o term $\int_0^\cdot u^\varepsilon(\nabla \varphi) \cdot \, dB_s$ in \eqref{transintegralR2} converges weakly in $L^2(\Omega\times [0,T])$ to $\int_0^\cdot u(\nabla \varphi) \cdot \, dB_s$.

\medskip
Note that the coefficients $b^\varepsilon$ and $\dive b^\varepsilon$ are strongly convergent in $L^2([0,T] \times \R^d)$ and $L_{loc}^1\big([0,T] \times\R^d \big)$ respectively. This implies that $b^\varepsilon \cdot \nabla \varphi + \varphi \dive b^\varepsilon$ strongly converges in $L^1([0,T]; L^1(\R^d) )$ to $b \cdot \nabla \varphi + \varphi \dive b$ because $\varphi$ is of compact support. We can therefore pass to the limit also in all the remaining terms in \eqref{transintegralR2}, to find that the limit process $u$ is a weak solution of \eqref{trasportUNO}.

\bigskip
{\it Step 2: Regularity.}  First,
let us denote by $\calY$ the separable metric 
space $C([0,T];  L^{2}(\R^d))$, and
consider a solution $u_\varepsilon$ of the 
regularized problem \eqref{trasport-reg}. For any $F\in \calX$, the function $V_{\varepsilon}(t,x):= \mathbb{E} [u^{\varepsilon}  (t,x) F ]$
is regular and we can apply Lemma \ref{lem:eq V} to get
$$
\begin{aligned}
    V_{\varepsilon}(t,x)  &=  V_{0}^\varepsilon(x)
   -\int_{0}^{t} \! \nabla V_{\varepsilon}(s,x) \cdot \big( {b^\varepsilon}(s,x) + h(s) \big) \, ds
     + \frac{1}{2}\int_{0}^{t} \!    \Delta V_{\varepsilon}(s,x)   \, ds \,.
\end{aligned}
$$
Rewrite this in differential form:
\begin{align*}
\partial_t V_{\varepsilon}^2(t,x)  &=    - \nabla V_{\varepsilon}^2(s,x) \cdot \big( {b^\varepsilon}(s,x) + h(s) \big)  + 
 V_\varepsilon \Delta V_{\varepsilon}(s,x)    \,.
\end{align*}
Now, integrating in time and space we get
$$
\begin{aligned}
    \int_{\R^d} V_{\varepsilon}^{2}(t,x)   \, dx &=  \,  \int_{\R^d} \big(V_{0}^\varepsilon \big)^{2}(x) \, dx    \\[5pt]
     &  -  2 \int_{0}^{t} \int_{\R^d} \! \nabla V_{\varepsilon}(s,x) V_{\varepsilon}(s,x)  b^\varepsilon(s,x)  \, dx  ds \\[5pt]
		     &  -  \int_{0}^{t} \int_{\R^d} \! \nabla \big( V_{\varepsilon}^{2} \big) (s,x) \cdot  h(s)  \, dx  ds \\[5pt]
     & 	- \int_{0}^{t}   \int_{\R^d} \!    \big| \nabla V_{\varepsilon}(s,x) \big| ^{2} \, dx  ds \, ,
\end{aligned}
$$
and rearranging the terms conveniently we obtain the bound
\begin{equation}
\label{Gronwall1}
\begin{aligned}
    \int_{\R^d} V_{\varepsilon}^{2}(t,x)   \, dx & 
     + \int_{0}^{t}   \int_{\R^d} \!    \big| \nabla V_{\varepsilon}(s,x) \big| ^{2} \, dx  ds    
    \leq  \int_{\R^d} \big(V_{0}^\varepsilon \big)^{2}(x) \, dx    \\[5pt]
     &  +  2 \int_{0}^{t} \int_{\R^d} \! |\nabla V_{\varepsilon}(s,x) \, V_{\varepsilon}(s,x) \, b^\varepsilon(s,x)|  \, dx  ds \\[5pt]
     &   \le \int_{\R^d} (V_{0}^\varepsilon)^{2}(x) \, dx
     \\[5pt]
    &  +   C  \int_{0}^{t}  \int_{\R^d} 
		|V_{\varepsilon}(s,x)|^{2} \, dx  ds  +   \frac{1}{4}  \int_{0}^{t}  \int_{\R^d} 
		|\nabla V_{\varepsilon}(s,x)|^{2} \, dx  ds \, ,
\end{aligned}
\end{equation}
where the positive constant $C$ can be chosen uniformly in $\varepsilon$.
Then,  we can apply Gr\"onwall's Lemma to obtain
\begin{equation}   \label{un}
    \int_{\R^d} V_{\varepsilon}^{2}(t,x)  \, dx  \leq C
			\int_{\R^d} (V_{0}^\varepsilon)^{2}(x) \, dx,
\end{equation}
and plugging \eqref{un} into \eqref{Gronwall1}, we also get
\begin{equation}\label{dos}
\int_{0}^{t}   \int_{\R^d} \!    \big| \nabla V_{\varepsilon}(s,x)  \big| ^{2}  \, dx  ds   \leq   C   \int_{\R^d} \big( V_{0}^\varepsilon \big)^{2}(x) \, dx \ .
\end{equation}

From (\ref{un}) and (\ref{dos}) we deduce the existence of a subsequence $\varepsilon_n$ 
(which can be extracted from the subsequence used in the previous step) for which  $ V_{\varepsilon_n}(t,x)$ converges weakly to the function $V(t,x)=\E[u(t,x)\, F]$ in $ \calY$ and such that $ \nabla V_{n}(t,x)$ converges weakly to $\nabla V(t,x)$ in $ L^{2}([0,T] \times \R^d )$. This allows us 
 to conclude that $V\in L^2([0,T]; H^{1}(\R^d) ) \cap C([0,T]; L^2(\R^d))$. Moreover, since $u$ is a bounded function,
this carries over to $V$. 
  \end{proof}

%%%%%%%%%%%%%%%%%%
\subsection{Uniqueness Proof} 
\label{UNIQUE}
%%%%%%%%%%%%%%%%%%

In this section, we shall prove Theorem \ref{uni}.
%for the SPDE \eqref{trasportUNO}, where
%As in the by now classical setting, 
The proof relies on the commutator Lemma \ref{conmuting}, below. 
If applied in the usual way, this lemma requires to have $W^{1,1}$ regularity either for the drift coefficient 
$b$ or for the solution $u$. This is precisely what we want to avoid: in our setting we have neither of them, 
since we want to deal with possibly discontinuous solutions and drift coefficients. However, 
the key observation is that, it is enough to ask such Sobolev regularity for the expected values 
$V(t,x)= \E [u(t,x) F]$ for $F\in \calX$, and not on the solution $u$ itself. 

\medskip
Before stating and proving the main theorem of this section, we shall introduce some further notations and the key lemma on commutators. We stress that in this section we will be working under both the sets of Hypothesis \ref{hyp}.

\bigskip
Let  $\{\rho_{\varepsilon} \}$ be a family of standard positive symmetric mollifiers. Given two functions $f:\R^d \mapsto \R^d$ and $g:\R^d \mapsto \R$, the commutator $\calR_\varepsilon(f,g)$ is defined as
\begin{equation}\label{def commut}
    \mathcal{R}_{\varepsilon}(f,g):= (f \cdot \nabla ) (\rho_{\varepsilon}\ast g )- \rho_{\varepsilon}\ast  (f\cdot \nabla g ) \, .
  \end{equation}
The following lemma is due to Le Bris and Lions \cite{BrisLion}.

\begin{lemma} \label{conmuting} (C. Le Bris - P. L.Lions )
Let  $f \in  L^2_\loc(\R^d ) $,  $g \in H^{1}(\R^d) $. Then, 
passing to the limit as $\varepsilon\rightarrow 0$
\[
    \mathcal{R}_{\varepsilon}(f,g) \rightarrow 0  \qquad in  \qquad L^1_\loc(\R^d) \, .
  \]
\end{lemma}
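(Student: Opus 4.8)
The plan is to prove the commutator Lemma \ref{conmuting}, which asserts that for $f \in L^2_\loc(\R^d)$ and $g \in H^1(\R^d)$ the commutator $\calR_\varepsilon(f,g) = (f\cdot\nabla)(\rho_\varepsilon * g) - \rho_\varepsilon * (f\cdot\nabla g)$ tends to zero in $L^1_\loc(\R^d)$. This is the classical DiPerna--Lions--type commutator estimate, and the strategy rests on two ingredients: an exact integral representation of $\calR_\varepsilon$ obtained by differentiating the mollification, and a density/approximation argument that reduces the problem to the case of smooth $g$.

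First I would write out $\calR_\varepsilon(f,g)$ explicitly. Since $\rho_\varepsilon(z) = \varepsilon^{-d}\rho(z/\varepsilon)$, a direct computation gives
\begin{equation*}
    \calR_\varepsilon(f,g)(x) = \int_{\R^d} g(x-y)\, f(x)\cdot \nabla\rho_\varepsilon(y)\,dy - \bigl(\rho_\varepsilon * (f\cdot\nabla g)\bigr)(x).
\end{equation*}
The key manipulation is to rewrite the first term by integrating by parts in $y$ (or equivalently using $\nabla_y\rho_\varepsilon(y) = -\nabla_x[\rho_\varepsilon(x-\cdot)]$) and to regroup, producing the representation
\begin{equation*}
    \calR_\varepsilon(f,g)(x) = \int_{\R^d} \frac{1}{\varepsilon^{d+1}} (\nabla\rho)\!\left(\tfrac{y}{\varepsilon}\right)\cdot \bigl(f(x-y) - f(x)\bigr)\, g(x-y)\,dy,
\end{equation*}
after the appropriate change of variables so that the difference quotient of $f$ appears. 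The hard part is controlling this in $L^1_\loc$ uniformly when $f$ is merely $L^2_\loc$ and not Sobolev; this is exactly where the assumption $g \in H^1$ compensates for the lack of regularity of $f$, and the interplay between the two integrabilities via H\"older's inequality is the technical heart of the argument.

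Next I would exploit the density of smooth functions in $H^1(\R^d)$. For $g$ smooth, one can Taylor-expand $g(x-y) = g(x) - \varepsilon\,\nabla g(x)\cdot(y/\varepsilon) + o(\varepsilon)$ inside the integral and, using that $\rho$ is symmetric and compactly supported so that $\int \nabla\rho(z)\otimes z\,dz = -I$, show by the dominated convergence theorem that $\calR_\varepsilon(f,g) \to 0$ pointwise and in $L^1_\loc$; the limit is $-(f\cdot\nabla)g + f\cdot\nabla g = 0$ once the cancellation is read off correctly. To pass from smooth $g$ to general $g\in H^1$, I would use the uniform bound
\begin{equation*}
    \|\calR_\varepsilon(f,g)\|_{L^1(K)} \leq C_K\, \|f\|_{L^2(K')}\, \|g\|_{H^1(K')}
\end{equation*}
on compact sets $K \subset K'$, which follows from the integral representation together with Young's convolution inequality and H\"older's inequality. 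Given $\delta>0$, choose $g_n$ smooth with $\|g - g_n\|_{H^1}$ small, split $\calR_\varepsilon(f,g) = \calR_\varepsilon(f,g_n) + \calR_\varepsilon(f,g-g_n)$, bound the second piece uniformly in $\varepsilon$ by the above estimate, and send $\varepsilon\to 0$ on the first piece; this standard $3\varepsilon$-type argument closes the proof.

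The main obstacle, I expect, is establishing the uniform estimate and the correct cancellation under the weak hypothesis $f\in L^2_\loc$ only. Unlike the original DiPerna--Lions lemma where $f \in W^{1,1}$ lets one bound $f(x-y)-f(x)$ by a gradient, here the smoothness must be borrowed entirely from $g$ through the derivative falling on $\rho$; one must therefore be careful that the $o(\varepsilon)$ remainder in the Taylor expansion, when paired against $\varepsilon^{-1}\nabla\rho(y/\varepsilon)$ and integrated against $f$, genuinely vanishes, and that the H\"older pairing $L^2 \times L^2$ (from $f$ and $\nabla g$) is the right one to keep the bound finite and uniform in $\varepsilon$.
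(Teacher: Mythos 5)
There is a genuine gap, and it sits exactly where you yourself locate ``the technical heart of the argument.'' Your claimed exact representation
\begin{equation*}
\calR_\ve(f,g)(x)=\int_{\R^d}\ve^{-(d+1)}\,(\nabla\rho)\!\left(\tfrac{y}{\ve}\right)\cdot\big(f(x-y)-f(x)\big)\,g(x-y)\,dy
\end{equation*}
is not correct, and cannot be made correct under the hypotheses. To produce a difference quotient of $f$ you must integrate by parts in $y$ inside $\rho_\ve*(f\cdot\nabla g)$, moving the derivative from $g$ onto $\rho_\ve(\cdot)f(\cdot)$; this generates, besides the term you wrote, the additional term $\rho_\ve*(g\,\dive f)$, which you have dropped. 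In the classical DiPerna--Lions argument this extra term is present and is precisely what cancels the (nonzero) limit of the difference-quotient integral, computed via $\int z_j\,\partial_i\rho(z)\,dz=-\delta_{ij}$; followed literally, your formula would yield a spurious nonzero limit. Worse, for $f\in L^2_\loc$ only, $\dive f$ is a distribution, so the representation cannot even be written down, and the quotient $\big(f(x-y)-f(x)\big)$ at scale $\ve$ cannot be controlled without $W^{1,1}$-type regularity of $f$ --- which is exactly what is unavailable here. You flag this as ``the main obstacle'' in your closing paragraph but never resolve it, so the proposal defers, rather than proves, the decisive step.

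The point you miss is that this lemma, unlike DiPerna--Lions, needs no cancellation structure at all, because the hypothesis $g\in H^1(\R^d)$ lets the derivative stay on $g$ throughout (this is the content of the Le Bris--Lions proof that the paper cites without reproducing). Since $\nabla(\rho_\ve*g)=\rho_\ve*\nabla g$, one can write
\begin{equation*}
\calR_\ve(f,g)=f\cdot\big(\rho_\ve*\nabla g-\nabla g\big)-\big(\rho_\ve*(f\cdot\nabla g)-f\cdot\nabla g\big),
\end{equation*}
and each bracket vanishes separately in $L^1_\loc$: on a compact $K$, Cauchy--Schwarz gives $\|f\cdot(\rho_\ve*\nabla g-\nabla g)\|_{L^1(K)}\le\|f\|_{L^2(K)}\,\|\rho_\ve*\nabla g-\nabla g\|_{L^2(\R^d)}\to 0$ by strong $L^2$ convergence of mollifications, while $f\cdot\nabla g\in L^1_\loc$ (again Cauchy--Schwarz), so $\rho_\ve*(f\cdot\nabla g)\to f\cdot\nabla g$ in $L^1_\loc$ by the standard mollifier property. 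Thus both terms of the commutator converge individually to $f\cdot\nabla g$ and the lemma follows. Your outer scaffolding (a uniform bilinear bound on compacts plus density of smooth functions in $H^1$) is sound in itself and could be salvaged by deriving the uniform bound directly from the definition rather than from the flawed representation; but it is unnecessary, since the two-line argument above already treats general $g\in H^1$.
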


\begin{proof}[{\bf Proof of Theorem \ref{uni}}] 
The proof relies on energy-type estimates on $V$ (see equation \eqref{eqen} below) combined with Gr\"onwall's Lemma. 
However, to rigorously obtain \eqref{eqen} two preliminary technical steps of regularization and localization 
are needed, where the above Lemma \ref{conmuting} will be used to deal with the commutators appearing 
in the regularization process.  \medskip

{\it Step 0: Set of solutions.} Remark that the set of quasiregular weak solutions is a linear subspace of $L^\infty \big(\Omega \times [0,T] \times \R^d \big)$, because the stochastic transport equation is linear, and the regularity conditions is a linear constraint. Therefore, it is enough to show that a quasiregular weak
solution $u$ with initial condition $u_0= 0$ vanishes identically. \bigskip

{\it Step 1: Smoothing.}
Let $\{\rho_{\varepsilon}(x)\}_\varepsilon$ be a family of standard symmetric mollifiers. For any $\varepsilon>0$ and $x\in\R^d$ we can use $\rho_\varepsilon(x-\cdot)$ as test function in the equation \eqref{eq V} for $V$. Observe that considering only quasiregular weak solutions starting from $u_0=0$ results in $V_0=0$. Using the regularity of $V$, we get
$$
\begin{aligned}
      \int_{\R^d} V(t,y) \rho_\varepsilon(x-y) \, dy  = &\, - \int_{0}^{t}  \int_{\R^d} \big( b(s,y) \cdot \nabla V(s,y)  \big)  \rho_\varepsilon(x-y) \ dy ds    \\[5pt]
        &-  \int_{0}^{t} \int_{\R^d} \! \big( h(s) \cdot \nabla V(s,x) \big) \rho_\varepsilon(x-y) \, dy ds     \\[5pt]
    & - \frac{1}{2}\int_{0}^{t}   \int_{\R^d} \! \nabla V(s,y) \cdot  \nabla_y \, \rho_\varepsilon(x-y) \, dy  ds \,.
\end{aligned}
$$
For each $t \in [0,T]$, we set $V_\varepsilon(t,x)= V(t,x) \ast \rho_\varepsilon(x)$, and using the definition \eqref{def commut} of the commutator 
$\big(\calR_\varepsilon(f,g)\big) (s)$ with $f=b(s, \cdot)$ and $g=V(s, \cdot)$, we have
$$
\begin{aligned}
    V_{\varepsilon}(t,x) + \int_{0}^{t} \big( b(s,x) + h(s) \big) \cdot  \nabla V_{\varepsilon}(s,x) \,  ds   & -    \frac{1}{2}\int_{0}^{t}   \Delta V_{\varepsilon}(s,x) \, ds      \\[5pt]
    & =         \int_{0}^{t} \big(\mathcal{R}_{\varepsilon}(b,V) \big) (s) \,  ds  \, .
\end{aligned}
$$
Due to $b$ and $V$ regularities, given by \eqref{con1}, and the solution Definition \ref{defisolu}, one easily obtains that $\mathcal{R}_{\varepsilon}(b,V)\in L^1\big( [0,T] ; L^1_{loc} (\R^d) \big)$. 
 Therefore, $V_\varepsilon$ is differentiable in time. To obtain an equation for $V_\varepsilon^2$ we can differentiate the above equation in time, multiply by $2V_\varepsilon$ and integrate again. We end up with
\begin{equation}
\label{1000}
\begin{aligned}
    V_{\varepsilon}^{2}(t,x) + \int_{0}^{t} \big( b(s,x) + h(s) \big)  \cdot  \nabla \big( V_{\varepsilon}^{2}(s,x) \big) \, ds
-  \int_{0}^{t}    V_\varepsilon (s,x)  \Delta V_{\varepsilon} (s,x) \, ds
\\[5pt]
	=	 2 \int_{0}^{t}  V_{\varepsilon}(s,x)   \mathcal{R}_{\varepsilon}(b,V) \, ds  \, .
\end{aligned}
\end{equation}
Remark that, by definition of solution, $V$ is bounded. Therefore, $V_\varepsilon$ 
is uniformly bounded. It follows that all the terms above have the right integrability properties, and the equation is well-defined.

\bigskip
{\it Step 2: Energy inequality and Passage to the limit.} 
Integrating equation \eqref{1000} in space, we have 
$$
\begin{aligned}
    \int_{\R^d}   V_{\varepsilon}^{2}(t,x)   \, dx \,  + & \int_{0}^{t}   \int_{\R^d}  |\nabla V_{\varepsilon}(s,x)|^{2}   \, dx  ds
\\[5pt]
	&\leq 2 \int_{0}^{t}  \int_{\R^d} |V_{\varepsilon}(s,x) \, \nabla V_{\varepsilon}(s,x) \cdot  b(s,x) |  \,  dx   ds
	\\[5pt]
	&	\quad  +  2 \int_{0}^{t}  \int_{\R^d} | V_{\varepsilon}(s,x)  \,  \mathcal{R}_{\varepsilon}(b,V)|  \,dx  ds.
\end{aligned}
$$
Then we obtain 
\begin{equation}
\label{ener0}
\begin{aligned}
    \int_{\R^d}   V_{\varepsilon}^{2}(t,x)   \, dx \,  
    + & \frac{1}{2}\int_{0}^{t}   \int_{\R^d}  |\nabla V_{\varepsilon}(s,x)|^{2}   \, dx  ds
\\[5pt]
	&\leq  C \int_{0}^{t}  \int_{\R^d}   | V_{\varepsilon}(s,x)|^{2}   \,  dx   ds
	\\[5pt]
	&	\quad  +  2 \int_{0}^{t}   \int_{\R^d}   |V_{\varepsilon}(s,x) \,  \mathcal{R}_{\varepsilon}(b,V)| \, dx ds,
\end{aligned}
\end{equation}
where the positive constant $C$ can be chosen uniformly in $\varepsilon$. 
Recall that $u$ is bounded, so that $V$ and $V_\varepsilon$ are (uniformly) bounded too. 
Moreover, by standard properties of mollifiers, it follows that  $V_\varepsilon \to V$ strongly in $L^{2}\big([0,T]; H^1(\R^d)\big) \cap C([0,T] ; L^2 (\R^d) )$, and we can use
 Lemma \ref{conmuting} and the uniform boundedness of $V_\varepsilon$ to deal with the
  term on the right hand side. Then, passing to the limit as $\varepsilon\rightarrow 0$ in the above equation 
\eqref{ener0}, we get
\begin{equation}\label{eqen}
\begin{aligned}
    \int_{\R^d}   V^{2}(t,x)   \, dx \,  + & \frac{1}{2}\int_{0}^{t}   \int_{\R^d}  |\nabla V(s,x)|^{2}   \, dx  ds
\\[5pt]
	&\leq  C \int_{0}^{t}  \int_{\R^d}   | V(s,x)|^{2}   \,  dx   ds. 
\end{aligned}
\end{equation}
 Applying Gr\"onwall's Lemma we conclude that for every $t\in[0,T]$, $V(t,x)=\E[u(t,x) F]=0$ for almost every $x\in\R^d$ and every $F\in \calX$. 

\bigskip
{\it Step 4: Conclusion.} From the result of the previous step we get that $\int_{[0,T]\times\R^d} \E [u(t,x) F] f(t,x)\, dx dt =0$ for all $F\in \calX$ and $f\in C^\infty_c ([0,T] \times \R^d)$. By linearity of the integral and the expected value we also have that
\begin{equation}\label{eq uY}
\int_{[0,T] \times \R^d} \E\big[ u(t,x) \,Y \big] f(t,x) \, dx dt =0
\end{equation}
for every random variable $Y$ which can be written as a linear combination of a finite number of $F\in \calX$. By Lemma \ref{expo} the span generated by $\calX$ is dense in $L^2(\Omega)$, 
hence \eqref{eq uY} holds for any $Y\in L^2(\Omega)$. Linear combinations of products of functions $Y f(t,x)$ are dense in the space 
of test functions $\psi(\omega,x,t) \in L^2(\Omega  \times [0,T]\times \R^d)$, so that for any compact set $K \subset \R^{d}$
\begin{equation*}
\int_{[0,T]\times K} \E \big[ u(t,x) \,\psi(\omega,t,x) \big]  \, dxdt =0  \, .
\end{equation*}
Consequently,  $u=0$ almost everywhere on $\Omega\times [0,T] \times \R^d$. 
\end{proof}

\begin{remark} 
\label{REMAS}  From the proof of Theorem \ref{lemmaexis1} it is possible to see that, under our weak hypothesis, {\bf any} weak solution $u$ of the Cauchy problem \eqref{trasportUNO} which is the $L^\infty\big( \Omega ; \calY \big)$-limit of weak solutions to regularized problems has the regularity of a quasiregular 
weak solution, and is therefore unique by Theorem \ref{uni}. 
In other words, we have also proved uniqueness in the sense of Theorem \ref{uni} 
in the class of solutions which are limit of regularized problems.
\end{remark}

%%%%%%%%%%%%%%%%%%%%%%%%%%%%%%%%%%%%%
\subsection{Selection principle Proof}
%%%%%%%%%%%%%%%%%%%%%%%%%%%%%%%%%%%%%

Finally, we present the 
\begin{proof}[{\bf Proof of Proposition \ref{PropSP}}]   Let $\{u^\varepsilon\}$, $\{u^\delta\}$ 
be two families of solutions of the regularized problem \eqref{trasport-reg} corresponding
respectively to mollifiers $\{\rho_{\varepsilon}\}_\varepsilon$ and $\{\rho_{\delta}\}_\delta$.
Appling the same procedure done in the proof of the Proposition (\ref{lemmaexis1}),
there are  sequences  
$\varepsilon_n \to 0$  and $\delta_n \to 0$ such that $u^{\varepsilon_n}, u^{\delta_n}$ weak-$\star$ converges in $L^\infty(\Omega \times [0,T]\times \R^{d})$ 
and weakly in $L^2(\Omega \times [0,T], L_{loc}^{2}(\R^{d}))$ to some processes  
$u, v \in L^\infty  \big( \Omega \times [0,T] \times \R^d \big) \cap  L^2(\Omega \times [0,T], L_{loc}^{2}(\R^{d})) $. Moreover, 
$V_{\epsilon_{n}}= \E[u^{\epsilon_{n}} F]$,  and $W_{\delta_{n}}= \E[v^{\delta_{n}} F]$ converge to $V$ and $W$ respectively in 
$L^2([0,T]; H^{1}(\R^d) ) \cap C([0,T]; L^2(\R^d))$. To ease notation, let us denote $\varepsilon_n$ by $\varepsilon$
and $\delta_n$ by $\delta$.

Now, we will show that $V=W$.  We  observe that $U_{\epsilon, \delta}=V_{\epsilon}- W_{\delta}$ verifies 
$$
    U_{\varepsilon, \delta}(t,x)  =  (V_{0}^\varepsilon-W_{0}^{\delta})(x)
   -\int_{0}^{t} \! \nabla U_{\varepsilon, \delta}(s,x) \cdot \big( {b^\varepsilon}(s,x) + h(s) \big) \, ds
$$	
	$$
	 -\int_{0}^{t} \! \nabla W_{\delta}(s,x) \cdot  ( b^\varepsilon(s,x) -b^\delta(s,x))  \, ds
     + \frac{1}{2}\int_{0}^{t} \!    \Delta U_{\varepsilon, \delta}(s,x)   \, ds \,.
$$

Then we have 
$$
\begin{aligned}
\partial_t U_{\varepsilon, \delta}^2(t,x) &=    - \nabla U_{\varepsilon, \delta}^2(t,x) \cdot \big( {b^\varepsilon}(s,x) + h(s) \big) 
\\[5pt]
&+  U_{\varepsilon, \delta}(t,x) \  \Delta U_{\varepsilon,\delta}(t,x)  + 2  U_{\varepsilon,\delta}(t,x) \nabla W_{ \delta}(t,x) \cdot  (b^\varepsilon-b^{\delta} )(t,x).
\end{aligned}
$$
	
Now, integrating in time and space we obtain
$$
\begin{aligned}
    \int_{\R^d} U_{\varepsilon,\delta}^{2}(t,x)   \, dx& = \,  \int_{\R^d} \big(V_{0}^\varepsilon-W_{0}^\delta \big)^{2}(x) \, dx    \\[5pt]
     &  -  2 \int_{0}^{t} \int_{\R^d} U_{\varepsilon, \delta}(s,x) \, \nabla U_{\varepsilon,\delta}(s,x) \cdot  (b^\varepsilon(s,x) + h(s) )  \, dx  ds \\[5pt]
		     &  - 2 \int_{0}^{t} \int_{\R^d} U_{\varepsilon, \delta}(s,x) \,  \nabla W_{\delta}(s,x) \cdot (b^\varepsilon(s,x)-b^\delta(s,x ))          \, dx  ds \\[5pt]
     & 	- \int_{0}^{t}   \int_{\R^d} \!    \big| \nabla U_{\varepsilon,\delta}(s,x) \big| ^{2} \, dx  ds \, ,
\end{aligned}
$$
and rearranging the terms conveniently, we have
$$
\begin{aligned}
    \int_{\R^d} U_{\varepsilon,\delta}^{2}(t,x)   \, dx& + \int_{0}^{t}   \int_{\R^d} \!    \big| \nabla U_{\varepsilon,\delta}(s,x) \big| ^{2} \, dx  ds\leq \int_{\R^d} \big(V_{0}^\varepsilon-W_{0}^\delta \big)^{2}(x) \, dx    
    \\[5pt]
     &  +  2 \int_{0}^{t} \int_{\R^d} |U_{\varepsilon, \delta}(s,x)| \, |\nabla U_{\varepsilon,\delta}(s,x) \cdot  b^\varepsilon(s,x)  |  \, dx  ds 
     \\[5pt]
		     &  + 2 \int_{0}^{t} \int_{\R^d} |U_{\varepsilon, \delta}(s,x)| \,  |\nabla W_{\delta}(s,x) \cdot (b^\varepsilon(s,x)-b^\delta(s,x ))|         \, dx  ds.
\end{aligned}
$$
Then, applying H\"older's inequality 
$$
\begin{aligned}
    &\int_{\R^d} U_{\varepsilon,\delta}^{2}(t,x)   \, dx + \int_{0}^{t}   \int_{\R^d} \!    \big| \nabla U_{\varepsilon,\delta}(s,x) \big| ^{2} \, dx  ds\leq \int_{\R^d} \big(V_{0}^\varepsilon-W_{0}^\delta \big)^{2}(x) \, dx    
    \\[5pt]
     &  +  C \int_{0}^{t} \int_{\R^d} |U_{\varepsilon, \delta}(s,x)|^2 \, dx  ds +  \frac{1}{4}  \int_{0}^{t}  \int_{\R^d} 
		|\nabla U_{\varepsilon}(s,x)|^{2} \, dx  ds
     \\[5pt]
     & + C (\int_{0}^{t} \int_{\R^d}  |\nabla W_{\delta}(s,x)|^{2} \ dx  ds)^{1/2}  (\int_{0}^{t} \int_{\R^d} |b^\varepsilon(s,x)-b^\delta(s,x)|^2 \, dx  ds)^{1/2},
%		     &  + 2 \int_{0}^{t} \int_{\R^d} |U_{\varepsilon, \delta}(s,x)| \,  |\nabla W_{\delta}(s,x) \cdot (b^\varepsilon(s,x)-b^\delta(s,x ))|         \, dx  ds.
\end{aligned}
$$		
where the positive constant $C$ does not depend on $\varepsilon,\delta> 0$. 
Due to Gronwall's inequality,  we have 
\begin{equation}\label{Gronwall2}
\begin{aligned}
    &\int_{\R^d} U_{\varepsilon,\delta}^{2}(t,x)   \, dx \leq \int_{\R^d} \big(V_{0}^\varepsilon-W_{0}^\delta \big)^{2}(x) \, dx    
%    \\[5pt]
%     &  +  C \int_{0}^{t} \int_{\R^d} |U_{\varepsilon, \delta}(s,x)|^2 \, dx  ds +  \frac{1}{4}  \int_{0}^{t}  \int_{\R^d} 
%		|\nabla U_{\varepsilon}(s,x)|^{2} \, dx  ds
     \\[5pt]
     & + C (\int_{0}^{t} \int_{\R^d}  |\nabla W_{\delta}(s,x)|^{2} \ dx  ds)^{1/2}  (\int_{0}^{t} \int_{\R^d} |b^\varepsilon(s,x)-b^\delta(s,x)|^2 \, dx  ds)^{1/2}.
%		     &  + 2 \int_{0}^{t} \int_{\R^d} |U_{\varepsilon, \delta}(s,x)| \,  |\nabla W_{\delta}(s,x) \cdot (b^\varepsilon(s,x)-b^\delta(s,x ))|         \, dx  ds.
\end{aligned}
\end{equation}	
Passing to the limit as $n \to \infty$, we have $U= 0$, that is, $V= W$, 
and arguing as in the step 4 of the Proposition (\ref{uni}),
we conclude that $u=v$. 

\medskip
Finally, we have that all sequence has subsequences that converge to the same limit, 
which implies our result. 
\end{proof}

\begin{remark} A  similar concept of week-asymptotic solution 
have been considered by other authors  and   has proved to be an efficient mathematical tool to
 study explicitly creation and superposition of singular solutions to various nonlinear PDEs, see for instance 
\cite{Albe}, \cite{Albe2}, \cite{Colom2}, \cite{Danilov}, \cite{Danilov2}, \cite{Panov}. 
\end{remark}

\appendix
%%%%%%%%%%%%%%%%%%%%%%%%
\section{Appendix}
%%%%%%%%%%%%%%%%%%%%%%%%

\begin{definition}\label{def Ft}
Given a filtered probability space with an $\R^d$-valued Brownian motion defined on it, 
$( \Omega, \calF, P, \calF_t, B_t)$, for any $h \in L^2([0,T] ;\R^d)$, we can define the random process
$$
 F_t= \exp \Big( \int_0^t h(s) \cdot d B_s - \frac{1}{2} \int_0^t  |h(s)|^2 \, ds \Big) \, ,
$$
for $t\in[0,T]$. Such random processes are called stochastic exponentials.
\end{definition}

\medskip
We recall that stochastic exponentials satisfy the following SDE ( see \cite[proof of Theorem 4.3.3]{oksen} ) 
\begin{equation}\label{SDEexpon2}
F_t =   1 +  \int_{0}^{t}  h(s) F_s \, dB_s \, .
\end{equation}
This can be obtained by applying It\^o formula to $F_t$. By Novikov's condition it also follows that any stochastic exponential $F_t$ is an $\calF_t$-martingale, and $\E[F_t] = 1$.

\medskip
When $t=T$, we shall use the short notation $F=F_T$ and, with a slight abuse of notation, 
still call the random variable $F$ a stochastic exponential. Let us recall the definition of the following space of random variables, which we call the space of stochastic exponentials:
$$ 
  \calX := \Big\{ F= \exp \Big( \int_0^T h(s) \cdot d B_s - \frac{1}{2} \int_0^T |h(s)|^2 \, ds \Big)   \    \Big|     \    h \in L^2 \big( [0,T] ;\R^d \big) \Big\} \, .
$$

\begin{remark}
Even though it is not really essential for our proof, we point out that for every $F \in \calX$ there exists a unique $h\in L^2(0,T)$ such that $F$ is the stochastic exponential of $h$. This can be easily shown using It\^o isometry.
\end{remark}

The following result, see \cite[Lemma 4.3.2]{oksen} or   
\cite[Lemma 2.3]{Mao}, is a key fact for our analysis. Recall that $\calF=\calF_T$.

\begin{lemma}\label{expo} 
The span generated by $ \calX $ is a dense subset of $ L^{2}( \Omega)$.
\end{lemma}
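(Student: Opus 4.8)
The plan is to prove the statement by duality. Since $\calX \subset L^2(\Omega) = L^2(\Omega, \calF_T, \PP)$, its linear span is dense if and only if the orthogonal complement of $\calX$ is trivial. So first I would take an arbitrary $G \in L^2(\Omega)$ satisfying $\E[GF] = 0$ for every $F \in \calX$ and aim to deduce $G = 0$. Expanding the definition of $F$ and dividing out the deterministic positive factor $\exp\big(-\tfrac12 \int_0^T |h(s)|^2\,ds\big)$, this orthogonality is equivalent to
$$
\E\Big[ G \exp\Big( \int_0^T h(s)\cdot dB_s \Big)\Big] = 0 \qquad \text{for all } h \in L^2([0,T];\R^d).
$$
I would then specialize $h$ to step functions: fixing a partition $0 = t_0 < t_1 < \dots < t_n = T$ and taking $h = \sum_{j=1}^n \lambda_j \, \mathbf{1}_{(t_{j-1},t_j]}$ with $\lambda_j \in \R^d$ turns the stochastic integral into $\sum_j \lambda_j \cdot \Delta_j B$, where $\Delta_j B := B_{t_j} - B_{t_{j-1}}$, so that $\E\big[G \exp(\sum_j \lambda_j \cdot \Delta_j B)\big] = 0$ for every $(\lambda_1,\dots,\lambda_n) \in (\R^d)^n$.

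The crucial step is to upgrade this identity from real to imaginary exponents. I would introduce $\Phi(\lambda) := \E\big[G \exp(\sum_j \lambda_j \cdot \Delta_j B)\big]$ for $\lambda \in (\C^d)^n$ and show it is well defined and entire. Indeed, since $G \in L^2(\Omega)$ and the increments $\Delta_j B$ are Gaussian and hence possess all exponential moments, the Cauchy--Schwarz inequality bounds the integrand in $L^1(\Omega)$ for every complex $\lambda$, and differentiation under the expectation (justified by the same Gaussian tail bounds) yields holomorphy in each variable. Being entire and vanishing on the totally real subspace $(\R^d)^n$, the function $\Phi$ must vanish identically; in particular, evaluating at purely imaginary arguments gives $\E\big[G \exp(i \sum_j \xi_j \cdot \Delta_j B)\big] = 0$ for all $\xi_j \in \R^d$.

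This last identity states that the Fourier transform of the finite signed measure $\nu$ on $(\R^d)^n$ defined by $\nu(C) = \E\big[G\, \mathbf{1}_{\{(\Delta_1 B,\dots,\Delta_n B)\in C\}}\big]$ vanishes identically; by injectivity of the Fourier transform, $\nu = 0$, which is equivalent to $\E\big[G \mid \mathcal{G}_n\big] = 0$, where $\mathcal{G}_n := \sigma(B_{t_1},\dots,B_{t_n})$. Finally I would refine the partitions along a sequence whose mesh tends to zero and whose nodes are dense in $[0,T]$: the $\sigma$-algebras $\mathcal{G}_n$ then increase to $\calF_T$, since $B$ generates $\calF_T$ and $t\mapsto B_t$ is continuous. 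By the (upward) martingale convergence theorem, $\E[G \mid \mathcal{G}_n] \to \E[G \mid \calF_T] = G$ in $L^2(\Omega)$, and as every term vanishes, $G = 0$, proving the density.

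I expect the main obstacle to be the passage from real to imaginary exponents, that is, establishing that $\Phi$ is entire and therefore pinned down by its values on the real subspace. This hinges on carefully justifying both the finiteness of the expectation and the interchange of differentiation and expectation for complex $\lambda$, which I would control using the $L^2$ integrability of $G$ together with the Gaussian tails of the increments. The remaining ingredients, namely injectivity of the Fourier transform and the martingale convergence as the partition refines, are standard.
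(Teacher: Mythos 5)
Your proof is correct, and it is essentially the argument the paper itself relies on: the paper does not prove Lemma \ref{expo} but cites \cite[Lemma 4.3.2]{oksen} and \cite[Lemma 2.3]{Mao}, and your chain of steps --- reduction by duality to an element $G$ orthogonal to all stochastic exponentials, specialization to step functions $h$, analytic continuation in the exponent to purely imaginary arguments, injectivity of the Fourier transform to conclude $\E\big[G \mid \sigma(B_{t_1},\dots,B_{t_n})\big]=0$, and martingale convergence along refining partitions --- is precisely the standard proof given in those references. There are no gaps: the complexification step you single out as the main obstacle is handled exactly as you describe (Cauchy--Schwarz together with Gaussian exponential moments gives finiteness and local domination, hence separate holomorphy, and the identity theorem applied one variable at a time forces the entire function to vanish).
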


We also have the following result.

\begin{lemma}\label{lemma B-F}
Let $F$ be a stochastic exponential and $Y_s\in L^2\big(\Omega\times [0,T]\big)$ an $\R^d$-valued, square integrable adapted process. Then,
\begin{equation}\label{B-F}
\E \Big[\int_{0}^{t}  Y_s \cdot d B_s \ F  \Big] =  \int_0^t h(s) \cdot \E \big[ Y_s  \ F   \big]  \, ds \, .
\end{equation}
\end{lemma}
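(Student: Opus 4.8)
The plan is to establish Lemma~\ref{lemma B-F} by first reducing to the special structure of the stochastic exponential via its defining stochastic differential equation \eqref{SDEexpon2}, and then applying the It\^o isometry to convert the expectation of a product of two It\^o integrals into a deterministic Lebesgue integral. First I would recall that by \eqref{SDEexpon2} the stochastic exponential satisfies $F_t = 1 + \int_0^t h(s) F_s \, dB_s$, so that $F=F_T$ is itself expressible as a stochastic integral plus the constant $1$. Since $F_t$ is an $\calF_t$-martingale with $\E[F_t]=1$, and since $Y_s$ is adapted and square integrable, both $\int_0^t Y_s \cdot dB_s$ and the martingale $F$ are square-integrable, so all the expectations below are well defined.

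\medskip
The key step is to use the martingale property to replace $F=F_T$ by $F_t$ inside the expectation. Writing $M_t := \int_0^t Y_s \cdot dB_s$, which is an $\calF_t$-martingale, I would argue that $\E[M_t F] = \E[M_t \, \E[F \mid \calF_t]] = \E[M_t F_t]$, using the tower property together with the fact that $M_t$ is $\calF_t$-measurable and $F$ is a martingale so $\E[F\mid\calF_t]=F_t$. This crucial reduction lets me replace the terminal variable $F$ by the value of the exponential process at the same time $t$, so that both factors are now It\^o integrals over $[0,t]$ driven by the same Brownian motion. Concretely, $\E[M_t F_t] = \E\big[ \big(\int_0^t Y_s \cdot dB_s\big)\big(1 + \int_0^t h(s) F_s \, dB_s\big)\big]$.

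\medskip
From here I would expand and apply the It\^o isometry (in its cross-product, or polarization, form). The term coming from the constant $1$ vanishes because $\E[\int_0^t Y_s\cdot dB_s]=0$. The remaining term is the expectation of a product of two It\^o integrals, which by the It\^o isometry equals the expected value of the integral of the dot product of the integrands:
\begin{equation*}
\E\Big[\Big(\int_0^t Y_s \cdot dB_s\Big)\Big(\int_0^t h(s) F_s \, dB_s\Big)\Big]
= \int_0^t \E\big[ Y_s \cdot h(s)\, F_s \big] \, ds
= \int_0^t h(s)\cdot \E\big[ Y_s F_s\big]\, ds,
\end{equation*}
where $h(s)$ is deterministic so it can be pulled outside the expectation. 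Finally, the same martingale/tower argument applied pointwise in $s$ gives $\E[Y_s F_s] = \E[Y_s \, \E[F\mid\calF_s]] = \E[Y_s F]$, since $Y_s$ is $\calF_s$-measurable; substituting this into the integrand yields exactly \eqref{B-F}.

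\medskip
The main obstacle I anticipate is the careful justification of replacing the terminal exponential $F$ by $F_t$ (and later by $F_s$) inside the expectations, which rests on the martingale property of $F_t$, the adaptedness of $Y_s$ and $M_t$, and the integrability needed to invoke the tower property and Fubini's theorem. One must confirm that $\int_0^t \E[|Y_s|\,|h(s)|\,F_s]\,ds$ is finite so that the interchange of expectation and time integral is legitimate; this follows from $Y \in L^2(\Omega\times[0,T])$, $h\in L^2([0,T])$, and the fact that $F_s$ has bounded $L^2$ norm uniformly on $[0,T]$ (a consequence of Novikov's condition and $F_s$ being a martingale). Once these integrability checks are in place, the It\^o isometry and the tower property do all the work and the identity follows.
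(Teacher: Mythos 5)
Your proof is correct and follows essentially the same route as the paper's: both expand the stochastic exponential via the SDE representation \eqref{SDEexpon2}, kill the constant term using the zero mean of the It\^o integral, apply the It\^o isometry (in polarized form) to the cross term, and finish with the tower/martingale identity $\E[Y_s F_s] = \E[Y_s F]$. The only difference is that you first condition to replace $F$ by $F_t$ so that both integrals live on $[0,t]$, whereas the paper applies the cross-isometry directly with the mismatched horizons $[0,t]$ and $[0,T]$ --- a cosmetic reorganization, with the added merit that your version spells out the integrability checks the paper leaves implicit.
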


\begin{proof}
Using the representation formula \eqref{SDEexpon2} we have
\begin{align*}
\E \Big[\int_{0}^{t}  Y_s \cdot d B_s \ F  \Big] &= \E \Big[\int_{0}^{t}  Y_s \cdot d B_s  \Big] +  \E \Big[\int_{0}^{t}  Y_s \cdot d B_s \ \int_0^T h(s) F_s \cdot dB_s  \Big] \\
&=  \E \Big[\int_{0}^{t}  Y_s \cdot h(s) F_s \, ds  \Big] \, .
\end{align*}
Since that $Y_s$ is $\mathcal{F}_{s}$-adapted, we obtain  
$$
\E \big[ Y_s   \, F_s  \big]  =  \E \big[ Y_s   \, F  \big] \, ,
$$
and \eqref{B-F} follows.
\end{proof}

%%%%%%%%%%%%%%%%%%%%%%%%
\section*{Acknowledgements}
%%%%%%%%%%%%%%%%%%%%%%%%
Conflict of Interest: Author Wladimir Neves has received research grants from CNPq
through the grant  308064/2019-4, and also by FAPERJ 
(Cientista do Nosso Estado) through the grant E-26/201.139/2021. 
Author Christian Olivera is partially supported by  CNPq
through the grant 426747/2018-6 and FAPESP by the grants 2020/15691-2	 
and 2020/04426-6.

%%%%%%%%%%%%%%%%%


\begin{thebibliography}{9999}
%%%%%%%%%%%%%%%%%%

\bibitem{Albe}
S. Albeverio, V. Danilov, {\it Construction to global in time solution to Kolmogorov-Feller pseudodifferential equations}
with a small parameter using characteristics. Math. Nachr. 285, 426-439, 2012.

\bibitem{Albe2}
 S. Albeverio, O. Rozanova,{\it A representation of solutions to a scalar conservation law in several dimension}, J. Math.
Anal. Appl. 405, 711-719, 2013. 


\bibitem{Alonso}
D.  Alonso-Oran,  A. Bethencourt de Leon,  S. Takao,   
{\it  The Burgers equation with stochastic transport: shock formation, local and global existence of smooth solutions},  
Nonlinear Differential Equations and Applications NoDEA 26,  57, 2019. 

\bibitem{Alonso2}
D.  Alonso-Oran,  A. Bethencourt de Leon,  S  {\it  On the Well-Posedness of Stochastic Boussinesq Equations with Transport Noise},
Journal of Nonlinear Science,  30, 175-224, 2020. 

\bibitem{ambrisio}
L.  Ambrosio, {\it Transport equation and
Cauchy problem for $BV$ vector fields}, Invent. Math.,  158,  227--260, 2004.


\bibitem{ambrisio2}
L. Ambrosio, G. Crippa. (2014) {\it Continuity equations and ODE
fows with non-smooth velocity},  Lecture Notes of a course given at HeriottWatt University, Edinburgh. Proceeding
of the Royal Society of Edinburgh, Section A: Mathematics, 144, 1191-1244.


\bibitem{AttFl11} S. Attanasio and F. Flandoli, {\it Renormalized Solutions for Stochastic Transport Equations and the Regularization by Bilinear Multiplicative Noise}. Comm. in Partial Differential Equations, 36(8), 1455--1474, 2011.


\bibitem{Beck}
L. Beck, F. Flandoli, M. Gubinelli, M. Maurelli, {\it
Stochastic ODEs and stochastic linear PDEs with critical drift: regularity, duality and uniqueness }, Electron. J. Probab.
 24, 2019.


\bibitem{DSSB} S. Benzoni-Gavage, D. Serre,
{\it Multidimensional hyperbolic partial differential equations, First order systems and applications}. 
Oxford Mathematical Monographs. The Clarendon Press Oxford University Press, Oxford, 2007.

\bibitem{BianBoni} S. Bianchini, P. Bonicatto,
{\it  A uniqueness result for the decomposition of vector fields in $\R^d$}, 
Invent. Math., 220, 225--393, 2020. 


\bibitem{BRESSANCONJ} A. Bressan,
{\it An ill posed Cauchy problem for a hyperbolic system in two space dimensions,}
Rend. Sem. Mat. Univ. Padova, 110, 2003, 103--117.


\bibitem{TBVV} T. Buckmaster, V. Vicol,
{\it Nonuniqueness of weak solutions to the Navier-Stokes equation}. 
Ann. Math., 189, (2019), 101--144.

\bibitem{Clop} A.  Clop1 · R. Jiang, J.  Mateu1, J.  Orobitg {\it
Linear transport equations for vector fields with subexponentially integrable divergence},
Calc. Var,   55-21, 2021. 

\bibitem{Colom2}
M. Colombeau, {\it Radon measures as solutions of the Cauchy problem for evolution equations}, 
Z. Angew. Math. Phys. 71, 2020. 


\bibitem{Crandall}
M.G. Crandall, P.L Lions, 
{\it  Viscosity solutions of Hamilton-Jacobi equations}.  
Trans. Amer. Math. Soc. 277, 1--42, 1983. 

\bibitem{Spino}
G. Crippa 1,  Laura V. Spinolo {\it An overview on some results concerning the transport equation and its applications to conservation laws}, Communications on Pure  Applied Analysis, 9, 1283-1293, 2010. 




\bibitem{Dafermos1} C.M. Dafermos, 
{\it Hyperbolic conservation laws in continuum physics}. 
Third edition. Grundlehren der Mathematischen Wissenschaften 
[Fundamental Principles of Mathematical Sciences], 325. Springer-Verlag, 2010.

\bibitem{Gess3}
K. Chouk, B.  Gess {\it Path-by-path regularization by noise for scalar conservation laws}, 
Journal of Functional Analysis, 277,  1,   1469-1498, 2019. 
\bibitem{Dafermos}
	C.M. Dafermos, Hyperbolic conservation laws in continuum physics. Third edition. 
	Grundlehren der Mathematischen Wissenschaften [Fundamental Principles of Mathematical Sciences], 325. Springer-Verlag, 2010.

\bibitem{Danilov}
 V. Danilov, D. Mitrovic, {\it Delta shock wave formation in the case of triangular hyperbolic system of conservation laws}, 
 J. Differ. Equ. 245, 3704-3734, 2008. 


\bibitem{Danilov2}
 V. Danilov, V.Shelkovich, {\it Delta shock wave formation in the case of triangular hyperbolic system of conservation laws}, 
 J. Differ. Equ., 211, 333-381, 2005. 



\bibitem{lellis2}
C. De Lellis. \emph{Notes on hyperbolic systems of conservation laws and transport equations.}
In Handbook of differential equations: evolutionary equations. Vol. III, 
Handb. Differ. Equ., pages 277--382. Elsevier/North-Holland, Amsterdam, 2007.

\bibitem{CamiloLaslo} C. De Lellis, L. Sz\'ekelyhidi, 
{\it On admissibility criteria for weak solutions of the Euler equations}. 
Arch. Ration. Mech. Anal., 195 (1 ), 225--260, 2010.




\bibitem{DL}
R. DiPerna and P.L. Lions, {\it Ordinary differential
equations, transport theory and Sobolev spaces}. Invent. Math.,  98, 
511--547, 1989.

\bibitem{Fre1}
 E. Fedrizzi and F. Flandoli, {\it Noise prevents singularities in linear transport equations}.  
 Journal of Functional Analysis, 264,  1329--1354, 2013.

\bibitem{Fre2} E. Fedrizzi, W. Neves, C. Olivera, 
\textsl{\ On a class of stochastic transport equations for $L_{loc}^{2}$ vector fields}.  
Annali della Scuola Normale Superiore di Pisa, Classe di Scienze, 18,  397-419, 2018.

\bibitem{EF1} E. Feireisl,
{\it Maximal dissipation and well-posedness for the compressible Euler system}. 
Arch. Ration. Mech. Anal., 16, 447--461, 2014.


\bibitem{FlanLuo}
 F. Flandoli and D. Luo. {\it High mode transport noise improves vorticity blowup
control in 3d navier-stokes equations},  arXiv preprint: 1910.05742, 2019.


\bibitem{Flanlect}
F. Flandoli, Random perturbation of PDEs and fluid dynamic models. 
Lectures from the 40th Probability Summer School held in Saint-Flour, 2010.
 Lecture Notes in Mathematics, 2015. Springer, Heidelberg, 2011. 



\bibitem{FGP2}
 F. Flandoli, M. Gubinelli and E. Priola, {\it Well-posedness of the transport equation by stochastic  perturbation}. Invent. Math., 180, 1--53, 2010.

\bibitem{Gali}
L.  Galimberti, K. H Karlsen, {\it Renormalization of stochastic continuity equations on Riemannian manifolds },   
arXiv:1912.10731, 2019. 





\bibitem{Gess}
B. Gess and M. Maurelli, {\it Well-posedness by noise for scalar conservation laws}, 43, Communications in Partial Differential Equations
1702-1736, 2019.


\bibitem{Gess4}
B. Gess and P.E. Souganidis, {\it Long-Time Behavior, Invariant Measures, and Regularizing 
Effects for Stochastic Scalar Conservation Laws}, Communications on Pure and Applied Mathematics, 70, 8, 1562-1597, 2017.

\bibitem{Gesssmith}
B. Gess,   S.Smith {\it Stochastic continuity equations with conservative noise}, Journal de Mathematiques Pures et Appliquees
 128, 225--263, 2019. 

 


\bibitem{Ku}
 H. Kunita, Stochastic flows and stochastic differential
equations. Cambridge University Press, 1990.

\bibitem{Ku3}
H. Kunita, {\it First order stochastic partial differential equations}.  In: Stochastic Analysis,
Katata Kyoto, North-Holland Math. Library, 32, 249--269, 1984.


\bibitem{BrisLion}
C. Le Bris and P.L. Lions, {\it Existence and uniqueness of solutions
to Fokker-Planck type equations with irregular coefficients}. Comm.
Partial Differential Equations, 33, 1272--1317, 2008. 

\bibitem{lion1}
P.L. Lions,  Mathematical topics in fluid mechanics, Vol. I: incompressible models. Oxford
Lecture Series in Mathematics and its applications, 3 (1996), Oxford University Press.

\bibitem{lion2}
P.L. Lions,  Mathematical topics in fluid mechanics, Vol. II: compressible models. Oxford
Lecture Series in Mathematics and its applications, 10 (1998), Oxford University Press.



\bibitem{Mao}
X. Mao, {\it  Stochastic differential equations and applications, Horwood Publishing Chichester}.
UK, 1997, second edition 2007.

\bibitem{Modena} S. Modena, 
{\it On some recent results concerning non-uniqueness for the transport equation}.
Hyperbolic problems: theory, numerics, applications. 
Proceedings of the 17th international conference on Hyperbolic Problems, 
AIMS 10, 562--568, 2020.


\bibitem{Moli}
 David A.C. Mollinedo, C. Olivera,  {\it Stochastic continuity  equation with non-smooth velocity}, 
Annali di Matematica Pura ed Applicata, 196, 1669-168, 2017. 

\bibitem{Mucha}
P. Mucha, {\it  Transport equation: Extension of classical results for $divb\in  BMO$}, 
Journal of Differential Equations,  249,  1871-1883, 2010.  


\bibitem{NO}
 W.  Neves, C. Olivera,  {\it Wellposedness for stochastic continuity equations with
 Ladyzhenskaya-Prodi-Serrin condition},   Nonlinear Differential Equations and Applications NoDEA
22, 1247-1258, 2015.



\bibitem{Ol}
  C. Olivera, {\it  Regularization by noise in one-dimensional continuity equation},  
	 Potential Anal,   51, 23-35, 2019.

\bibitem{O2}
  C. Olivera, {\it  Well-posedness of the non-local conservation law by stochastic perturbation
}, Manuscripta Mathematica,   162, 367-387, 2020. 

\bibitem{oksen}
B. \O ksendal, Stochastic Differential Equations: An Introduction With Applications.
Springer-Verlag, 2003.

\bibitem {Pardoux}E. Pardoux, \textit{Equations aux d\'{e}riv\'{e}es
partielles stochastiques non lin\'{e}aires monotones. Etude de solutions
fortes de type It\^{o}}. PhD Thesis, Universite Paris Sud, 1975.


\bibitem{oksen}
B. \O ksendal, Stochastic Differential Equations: An Introduction With Applications.
Springer-Verlag, 2003.

\bibitem{Panov}
E.  Panov, M. Shelkovich, {\it  $\delta$-shock waves as a new type of solutions to systems of conservation laws}, 
 J. Differ Equ. 228, 49-86, 2006. 

 

\bibitem{Pardoux}
E. Pardoux, {\it  Equations aux d\'eriv\'ees partielles stochastiques non lin\'eaires monotones.
Etude de solutions fortes de type It\^o}.  PhD Thesis, Universite Paris Sud, 1975.




\bibitem{Wei}
J. Wei, J. Duan, H. Gao, L.  Guangying  {\it  Stochastic regularization for transport equations
} Stochastics and Partial Differential Equations: Analysis and Computations, 9, 105-141, 2021



\end{thebibliography}
\end{document}